\newtheorem{theorem}{Theorem}[section]
\newtheorem{lemma}[theorem]{Lemma}
\newtheorem{proposition}[theorem]{Proposition}
\theoremstyle{definition}
\theoremstyle{remark}
\newtheorem{remark}[theorem]{Remark}
\numberwithin{equation}{section}
\newcommand{\B}{\ensuremath{\mathcal{B}}}
\newcommand{\U}{\mathcal{U}}
\newcommand{\set}[1]{\left\{#1\right\}}
\newcommand{\ga}{\gamma}
\newcommand{\f}{\infty}
\newcommand{\al}{\alpha}
\newcommand{\ra}{\rightarrow}
\begin{document}

\title{Smallest bases of expansions with multiple digits}

\author{Derong Kong}
\address{School of Mathematical Science, Yangzhou University, Yangzhou, Jiangsu 225002, People's Republic of China.}
\email{derongkong@126.com}
\author{Wenxia Li}
\address{Department of Mathematics, Shanghai Key Laboratory of PMMP, East China Normal University, Shanghai 200062,
People's Republic of China}
\email{wxli@math.ecnu.edu.cn}

\author{Yuru Zou}
\address{College of Mathematics and Computational Science, Shenzhen University, Shenzhen 518060, People's Republic of China}
\email{yuruzou@szu.edu.cn}

 \renewcommand{\thefootnote}{}
\footnotetext{Correspond author: Yuru Zou, email:yuruzou@szu.edu.cn}
\date{\today}
\dedicatory{}


\subjclass[2010]{Primary: 11A63, Secondary: 37B10}

\begin{abstract}
Given two   positive integers   $M$ and  $k$, let $\B_k$ be the set of bases $q>1$ such that there exists a real number $x$ having exactly $k$ different $q$-expansions over the alphabet $\{0,1,\cdots,M\}$. In this paper we investigate the smallest base $q_2$ of $\B_2$, and
  show that if $M=2m$ the smallest base
$$q_2 =\frac{m+1+\sqrt{m^2+2m+5}}{2},$$
and if $M=2m-1$ the smallest base $q_2$ is the appropriate root of
$$
    x^4=(m-1)\,x^3+2 m\, x^2+m \,x+1.
$$
Moreover,   for $M=2$ we
show that $q_2$ is also the smallest base  of  $\B_k$   for all
$k\ge 3$. This turns out to be   different from that for $M=1$.
\end{abstract}
\keywords{ beta expansion, unique expansion, two expansion, smallest
bases.} \maketitle

\section{Introduction}\label{sec: Introduction}
Given $M\ge 1$ and $q>1$, the infinite sequence $(d_i)=d_1d_2\cdots$ with $d_i\in\{0,1,\cdots,M\}, i\ge 1$, is called a $q$-expansion of $x$ if
$$
x=\sum_{i=1}^\f\frac{d_i}{q^i}.
$$
Here the alphabet  $\set{0,1,\cdots,M}$ will be fixed throughout  the paper.

Non-integer base  expansions were pioneered by R\'{e}nyi
\cite{Renyi_1957} and Parry \cite{Parry_1960}. It was first
discovered by Erd\H{o}s and Jo\'{o} \cite{Erdos_Joo_1992} that for
any $k\ge 2$ there exist $q\in(1,2)$  such that $1$ has exactly $k$
different $q$-expansion with alphabet $\{0,1\}$.

 For $M\ge 1$ and $k=1,2,\cdots,\aleph_0$ or $2^{\aleph_0}$ let $\B_k=\B_k(M)$ be the set of $q>1$ such that there exists $x\in I_q:=[0, M/(q-1)]$ having exactly $k$ different $q$-expansions.

When $M=1$, the following results were obtained  in    \cite{Sidorov_2009, Baker_Sidorov_2014, Zou_Kong_2015} .
\begin{theorem}\label{th:11}
Let $M=1$. Then
\begin{enumerate}[label={\rm(\alph*)}]
  \item the  smallest element of $\B_2(1)$ is
$$q_2(1)\approx 1.71064$$
the appropriate root of  $x^4=2x^2+x+1$.
  \item \[q_2(1)\in\B_1(1)\cap\B_2(1)\cap\B_{2^{\aleph_0}}(1)\quad\textrm{and}\quad q_2(1)\notin\B_{\aleph_0}(1)\cup\bigcup_{k=3}^\f\B_{k}(1).\]
\end{enumerate}
\end{theorem}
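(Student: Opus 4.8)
The plan is to translate everything into the symbolic dynamics of the two branches $T_0(z)=qz$ and $T_1(z)=qz-1$. For $M=1$ the digit $0$ is admissible at a value $z$ iff $z\le 1/(q(q-1))$ and the digit $1$ iff $z\ge 1/q$, so a genuine choice occurs exactly when $z$ lies in the switch region $S_q:=[1/q,\,1/(q(q-1))]$, which is nondegenerate for $q<2$. Coding $x\in I_q$ by the infinite paths of the resulting binary tree, a node has two children precisely when its value lies in $S_q$, the number of $q$-expansions of $x$ equals the number of infinite paths in the tree rooted at $x$, and $x$ has a unique expansion iff this tree is a single path. I will phrase the uniqueness condition through the de Vries--Komornik lexicographic description of the univoque set $\U_q$ in terms of the quasi-greedy expansion $\al(q)=(\al_i)$ of $1$: a code $(d_i)$ is univoque iff $\si^n(d_i)\prec\al(q)$ whenever $d_n=0$ and $\overline{\si^n(d_i)}\prec\al(q)$ whenever $d_n=1$. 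I will also use that $\al(q)$, hence $\U_q$, grows with $q$.

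For part (a) I would first reduce ``exactly two expansions'' to a single branching event. Two expansions of $x$ share a prefix, split once at some level into a digit $1$ and a digit $0$, and thereafter each tail must be univoque; writing $A,B$ for the values of these two univoque tails, the requirement that both branches reconstruct $x$ becomes $B-A=1$, where $B=qy\in[1,\,1/(q-1)]$ is the image of the branch point $y\in S_q$. Thus $q\in\B_2(1)$ iff there exist $A,B\in\U_q$ with $B-A=1$ and $B\in[1,1/(q-1)]$, and minimality of the base becomes an optimisation over admissible pairs of univoque sequences. Using the lexicographic characterisation together with the monotonicity of $\U_q$, I would argue that the smallest base is attained by the lexicographically extremal admissible pair, reduce to finitely many candidate eventually periodic tails, and read off the relation $B-A=1$; matching these tails forces the quartic $x^4=2x^2+x+1$ and its root $q_2(1)\approx 1.71064$. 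The delicate point is the minimality: one must show no shorter or lexicographically smaller gluing survives the univoque inequalities at a smaller base, which is exactly where the monotonicity of $\al(q)$ and a finite case analysis enter.

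For the membership statements in (b) the first three are comparatively soft. Every base lies in $\B_1(1)$ because $0$ and $1/(q-1)$ always carry the unique expansions $0^\f$ and $1^\f$; $q_2(1)\in\B_2(1)$ is part (a); and $q_2(1)\in\B_{2^{\aleph_0}}(1)$ follows since for $q<2$ the interior of $S_q$ is nonempty, so one can construct a point whose orbit re-enters the interior of $S_q$ at infinitely many prescribed levels, yielding a full binary subtree and hence a Cantor set of expansions.

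The core difficulty is the pair of exclusions $q_2(1)\notin\B_{\aleph_0}(1)$ and $q_2(1)\notin\bigcup_{k\ge3}\B_k(1)$: at this one base no $x$ may have exactly $k$ expansions for $3\le k<\f$, nor exactly $\aleph_0$. I would attack this through the rigidity of the branching tree at $q=q_2(1)$. Having exactly $k\ge3$ expansions would force the two-child configuration of part (a) to be \emph{nested}, so some child of a branch point would itself be a point with $k-1$ expansions, meaning its orbit must reach a second branch point both of whose children are again univoque. Translating this into the lexicographic conditions, one shows that the univoque tails available at $q_2(1)$ are too constrained to be stacked: any attempt to place a second admissible branch point below the first either violates the univoque inequalities at $q_2(1)$, so that no such $x$ exists and every finite $k\ge3$ is excluded, or propagates into infinitely many further branchings along uncountably many paths, so that the cardinality jumps directly to $2^{\aleph_0}$ and $\aleph_0$ is excluded as well. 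Establishing this dichotomy---that at the extremal base the only finite possibilities are $1$ and $2$ and every remaining point carries a continuum---is the main obstacle, and it is precisely the phenomenon separating $M=1$ from the case $M=2$ treated later in the paper.
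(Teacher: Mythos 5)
A point of orientation first: the paper does not actually prove Theorem~\ref{th:11}; it quotes it from Sidorov, Baker--Sidorov and Zou--Kong. So the only internal comparison available is with the paper's general-$M$ machinery (Sections~\ref{sec:unique expansion}--\ref{sec:smallest base with M odd}), of which the $M=1$ statement is the prototype. Your part~(a) follows essentially that route: describe $\U_q'$ lexicographically for $q$ just above the generalized golden ratio, reduce ``exactly two expansions'' to a single branch point in the switch region whose two children carry univoque tails, turn $B-A=1$ into a family of polynomial equations indexed by the finitely many shapes of univoque tails, and minimize using the monotonicity of $\al(q)$ and of the roots in the parameters. This is correct in outline and is exactly the paper's proof scheme for Theorem~\ref{th:12} (compare Lemma~\ref{l41} and Lemmas~\ref{l43}--\ref{l46}); what you leave implicit --- the explicit list $0^k(10)^\f$, $0^k(01)^\f$ and reflections of admissible tails, and the monotonicity of the candidate roots in $k$ --- is routine and corresponds to Proposition~\ref{prop:24} with $m=1$.

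The genuine gap is in part~(b). The two exclusions are asserted, not proved, and the dichotomy you propose is false as stated. You claim a second admissible branch point below the first either violates the univoque inequalities or ``propagates into infinitely many further branchings along uncountably many paths, so that the cardinality jumps directly to $2^{\aleph_0}$.'' But there is a third configuration you never rule out: a \emph{comb}, i.e.\ branch points arranged along a single path, each carrying one univoque sibling. A comb with $k-1$ branch points gives exactly $k$ expansions and an infinite comb gives exactly $\aleph_0$; this is precisely what happens at $q_2(2)$ in Section~\ref{sec:multiple expansions} (the points $x_k=(1(00)^{k-1}1^\f)_{q_2}$ and $x=1$), and it is the whole reason Theorem~\ref{th:13} contrasts with Theorem~\ref{th:11}(b). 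So the entire content of the exclusions is to show that at $q_2(1)$ no comb of length $\ge 2$ exists --- that once a branch point's univoque child re-enters the switch region, the inequalities imposed by $\al(q_2(1))$ destroy the univoqueness of the next sibling --- and this requires an explicit computation with $\al(q_2(1))$, not a soft cardinality argument. In the literature the finite-$k$ exclusion comes from the separate theorem that $\min\B_k(1)\approx 1.75488>q_2(1)$ for all $k\ge 3$ (quoted here as Theorem~\ref{th:51}(a)), and the $\aleph_0$ exclusion is the main result of Zou--Kong; neither follows from what you have written.
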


In this paper we  investigate the smallest base of $\B_2(M)$ for all $M\ge 1$.
\begin{theorem}\label{th:12}
  \begin{enumerate}[label={\rm(\alph*)}]
    \item If $M=2m$, then the smallest base  $q_2(M)$ of $\B_2(M)$ is given by
    $$q_2(M)=\frac{m+1+\sqrt{m^2+2m+5}}{2}.$$
    \item If $M=2m-1$, then the smallest base $q_2(M)$ of $\B_2(M)$ is  the appropriate root of
    $$
    x^4=(m-1)x^3+2 m \,x^2+m\, x+1.
    $$
  \end{enumerate}
\end{theorem}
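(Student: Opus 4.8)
The plan is to pass from counting $q$-expansions to symbolic dynamics and reduce ``exactly two expansions'' to a single combinatorial configuration. Viewing an expansion of $x$ as a branching process, $x$ has exactly two $q$-expansions precisely when its expansion tree has a single branching node, that node has exactly two children, and each child is the root of a unique (univoque) expansion. Since the admissible-digit window at any value has width $M/(q-1)$, for $q>M/2+1$ — the regime where $q_2(M)$ lives — every branching is at most binary, so the two children correspond to consecutive digits $a,a+1$. Equating the two expansions shows that $x$ has exactly two expansions iff there are univoque sequences $\mathbf u,\mathbf v$ with $\mathrm{val}(\mathbf u)-\mathrm{val}(\mathbf v)=1$ and an admissible branching digit $a$, where $\mathrm{val}(\mathbf w)=\sum_i w_i q^{-i}$. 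I would then record the lexicographic test for univoqueness in terms of the quasi-greedy expansion $\al(q)=(\al_i)$ of $1$: a sequence $(d_i)$ is univoque iff $\si^n(d_i)\prec\al(q)$ whenever $d_n<M$ and $\si^n(\overline{d_i})\prec\al(q)$ whenever $d_n>0$, with $\overline{d}=M-d$; crucially $\al(q)$ is strictly increasing in $q$, which makes the threshold monotone.

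For the upper bound I would exhibit the critical pair explicitly, and here the parity of $M$ splits the argument. For $M=2m$ one works at the base with $\al(q)=((m+1)0)^\f$, equivalently $q^2=(m+1)q+1$, and checks that $\mathbf v=(m-1)\,m^\f$ and $\mathbf u=(2m)(m+1)\,m^\f$ are both univoque — a short verification against $\al(q)$ and $\overline{\al(q)}=((m-1)(2m))^\f$ — while a direct computation gives $\mathrm{val}(\mathbf u)-\mathrm{val}(\mathbf v)=\tfrac{m+1}{q}+\tfrac{1}{q^2}$, which equals $1$ exactly on $q^2=(m+1)q+1$; any admissible $a$ then yields a point with exactly two expansions, so $q_2(2m)\in\B_2$. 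For $M=2m-1$ I would instead use the reflection symmetry $d\mapsto M-d$: since $M$ is odd the two central consecutive digits are $m-1,m$ with $(m-1)+m=M$, so the extremal configuration is the reflection-symmetric one in which $x$ is the centre $M/(2(q-1))$ and the two expansions are mutual reflections branching into $m-1$ and $m$. The larger child is then a specific eventually periodic univoque sequence $\mathbf u$, and the distance condition becomes $2\,\mathrm{val}(\mathbf u)-M/(q-1)=1$; carrying out the periodic computation produces the quartic $x^4=(m-1)x^3+2m\,x^2+m\,x+1$, which for $m=1$ recovers the value in Theorem~\ref{th:11}.

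The hard part is the lower bound: showing that for every $q<q_2(M)$ no point has exactly two expansions. I would argue by contradiction from the reduction above, so that such a $q$ would admit univoque $\mathbf u,\mathbf v$ with $\mathrm{val}(\mathbf u)-\mathrm{val}(\mathbf v)=1$ and an admissible branch. Because $\al(q)$ strictly decreases as $q$ decreases, for $q<q_2$ the univoque condition squeezes every shift of $\mathbf u$ and of $\mathbf v$ strictly between $\overline{\al(q)}$ and $\al(q)$; the plan is to convert this squeezing into a quantitative upper bound on the largest value-gap achievable between a univoque sequence and a univoque sequence lying immediately below it, and to show this bound is strictly smaller than $1$ (or that the branching digit is destroyed). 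The main obstacle is that this must be proved \emph{uniformly over all configurations} — not only the extremal eventually periodic candidates but arbitrary, possibly aperiodic, univoque pairs — and separately for the two parities, since the even and odd extremal sequences are genuinely different. I expect the cleanest route is to show that just below $q_2(M)$ the two candidate children are forced to merge (their values coincide or cross), equivalently that the tail inequalities defining the critical sequences reverse exactly at $q_2(M)$, so that the configuration first becomes admissible at the stated algebraic value and nowhere below it.
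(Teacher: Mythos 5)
Your overall reduction (a single binary branching point whose two children are univoque, hence an equation $\mathrm{val}(\mathbf u)-\mathrm{val}(\mathbf v)=1$ between univoque tails) is exactly the paper's starting point (Lemmas \ref{lem:31} and \ref{l41}), and your even-case critical pair $\mathbf v=(m-1)m^\f$, $\mathbf u=\overline{0(m-1)m^\f}=(2m)(m+1)m^\f$ at the base with $q^2=(m+1)q+1$ is the paper's extremal configuration $q_{1,0,m-1,m-1}$. But there are two genuine problems. First, your odd-case upper bound is built on a wrong guess: the extremal configuration for $M=2m-1$ is \emph{not} reflection-symmetric. The paper's minimizer is $q^{(2)}_{2,0,m-1,m-1}$, where $x$ has the two expansions $1\,0\,0\,(m-1)((m-1)m)^\f$ and $0\,m\,(m(m-1))^\f$; these are not mutual reflections and $x$ is not the centre $M/(2(q-1))$ (already for $m=1$ the two expansions are $1000(01)^\f$ and $01(10)^\f$). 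A reflection-symmetric pair would force the two tails to be $0^{j}v((m-1)m)^\f$ and its own reflection, i.e.\ the parameter choice $k=j$, $u=v$ in equations (\ref{eq:41})--(\ref{eq:42}); Lemmas \ref{l47} and \ref{l411} show precisely that no such symmetric choice yields a root in $(p_1,p_2]$ (equation (\ref{eq:41}) has none at all, and (\ref{eq:42}) requires $j=0$, $k\in\{2,3\}$). So the computation you describe cannot produce the quartic from a realizable configuration, and the asymmetry between the tails $(m(m-1))^\f$ and $((m-1)m)^\f$ is exactly what makes the odd case delicate.

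Second, the lower bound — which you correctly identify as the hard part — is left as a plan, and the missing ingredient is the one that makes the paper's proof work: an explicit classification of $\U_q'$ for $q$ between the generalized golden ratio $p_1$ and a second critical base $p_2$ (Propositions \ref{prop:23} and \ref{prop:24}). That classification shows every univoque sequence in this range is of the eventually periodic form $0^k u\,(\text{fixed tail})$ or its reflection, so the ``arbitrary, possibly aperiodic, univoque pairs'' you worry about simply do not exist there; the uniform statement you hope to prove by a value-gap estimate is instead obtained by reducing to a two-integer-parameter family of algebraic equations and checking monotonicity in $q$ and in the parameters. You also need (and do not supply) the input that no $q\le p_1$ lies in $\B_2(M)$ at all (Lemma \ref{prop:22}) and that $p_2\in\B_2(M)$, which together locate $q_2(M)$ in $(p_1,p_2]$ and justify restricting to that window; your cutoff $q>M/2+1$ coincides with $p_1$ only for even $M$ and is asserted rather than proved. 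Without these pieces the argument does not close.
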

In terms of Theorem \ref{th:12} we give  numerical calculations of $q_2=q_2(M)$ for $M=1,2,\cdots, 7$.
\begin{center}
 \begin{tabular}{|c|c|c|c|c|c|c|c|}
     \hline
     M & 1 & 2 & 3 & 4 & 5 & 6 & 7 \\\hline
     $q_2\approx$ & 1.71064& 2.41421& 2.75965& 3.30278& 3.80320& 4.23607 &4.83469 \\
     \hline
   \end{tabular}
   \end{center}

 By Theorem \ref{th:11} (b) it follows that for $M=1$ any $x\in I_{q_2(1)}$ can only be in one the following situations:
 (1) $x$ has a unique $q_2(1)$-expansion; (2) $x$ has exactly two different $q_2(1)$-expansions; (3) $x$ has a continuum of $q_2(1)$-expansions. One may expect that this   occurs for all $M\ge 1$.

 Our next result shows that this is not the case. In particular, for $M=2$ we show that for any $k=1,2,\cdots,\aleph_0$ or $2^{\aleph_0}$ there exists $x\in I_{q_{2}(2)}$ which has exactly $k$ different $q_2(2)$-expansions.
  \begin{theorem}\label{th:13}
 Let $M=2$. Then
 \[q_2(2)=1+\sqrt{2}\, \in\B_{2^{\aleph_0}}(2)\cap\B_{\aleph_0}(2)\cap\bigcap_{k=1}^\f\B_k(2).\]
Furthermore, $q_2(2)$ is the smallest element of $\B_k(2)$ for $k= 2, 3,\cdots$.
 \end{theorem}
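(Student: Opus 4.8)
Throughout write $q=q_2(2)=1+\sqrt2$, so that $q^2=2q+1$, equivalently $\tfrac{2}{q}+\tfrac1{q^2}=1$, and $I_q=[0,\sqrt2]$. My plan rests on a complete description of the branching dynamics of the maps $T_j(x)=qx-j$ ($j=0,1,2$) on $I_q$. A short computation locates the two switch regions $S_{01}=[\sqrt2-1,\,2-\sqrt2]$ (where digits $0,1$ are both admissible) and $S_{12}=[2\sqrt2-2,\,1]$ (digits $1,2$), and singles out the two branch points $a=1$ and $b=\sqrt2-1$, which satisfy the cycle relations $a\xrightarrow{\,2\,}b\xrightarrow{\,0\,}a$ together with the escapes $a\xrightarrow{\,1\,}\sqrt2$ and $b\xrightarrow{\,1\,}0$, where $\sqrt2$ (tail $2^\infty$) and $0$ (tail $0^\infty$) are fixed points carrying unique expansions. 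I would record once and for all the quasi-greedy expansion of $1$, namely $\alpha=(20)^\infty$, together with the attendant lexicographic characterization of univoque sequences, since every claim below ultimately reduces to deciding whether a prescribed tail is univoque.

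First, the two easy memberships. For $q\in\B_{\aleph_0}(2)$ I would analyze the expansion tree of the point $x=1$: reading off the cycle above, every expansion of $1$ is either the spine $(20)^\infty$, or $(20)^n1\,2^\infty$, or $(20)^n2\,1\,0^\infty$ for some $n\ge0$; these are countably many, pairwise distinct, and the forced tails $2^\infty,0^\infty$ admit no further branching, so $1$ has exactly $\aleph_0$ expansions. For $q\in\B_{2^{\aleph_0}}(2)$ I would exhibit the binary horseshoe point
$$p=\frac{2q^2}{q^3-1},$$
at which both admissible continuations return to $p$ after a forced block (digits $200$ on the digit-$2$ side, digits $121$ on the digit-$1$ side); hence the expansion tree of $p$ is a complete binary tree and $p$ carries a continuum of expansions, its two periodic ones being $(200)^\infty$ and $(121)^\infty$.

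The heart of the matter is showing $q\in\B_k(2)$ for every finite $k\ge1$. Here I would argue by induction on $k$, building a point with exactly $k$ expansions as a single branch point $A_k$ whose two children (which always differ in value by exactly $1$) have exactly $1$ and $k-1$ expansions respectively: the count-$1$ child is a univoque \emph{escape}, while the count-$(k-1)$ child is a forced-orbit predecessor of the previous gadget $A_{k-1}$. The base cases are a univoque point (e.g. the tail $1^\infty$, value $\sqrt2/2$) for $k=1$ and the explicit double point with expansions $\{2\,0\,1^\infty,\ 1\,2\,2\,1^\infty\}$ for $k=2$. The main obstacle — and where essentially all the work lies — is choosing the escape tails (which I expect to take the shape $0^n1^\infty$ and their digitwise reflections) so that two competing requirements hold at once: the escape child must be genuinely univoque (checked against $\alpha=(20)^\infty$ by the lexicographic criterion), yet the recursive child must \emph{not} fall into the $a$–$b$ cycle, which would inflate the count from $k$ to $\aleph_0$. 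Verifying that no spurious branch is hidden in the forced orbit joining $A_k$ to $A_{k-1}$, and that the resulting $k$ expansions are genuinely distinct, is a delicate lexicographic bookkeeping that I would carry out uniformly in $k$.

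Finally, minimality: I claim $q$ is the smallest element of $\B_k(2)$ for each finite $k\ge2$. Membership is the construction above, so it remains to see $\B_k(2)\cap(1,q)=\varnothing$. For this I would use the general reduction that if some $x$ has exactly $k$ expansions with $2\le k<\infty$, then its expansion tree has exactly $k$ infinite branches; descending from the root through branch nodes to one below which exactly two infinite branches remain, the corresponding remainder $T_{d_1\cdots d_n}(x)$ has exactly two expansions, so that base lies in $\B_2(2)$. Hence $\B_k(2)\subseteq\B_2(2)$ for every finite $k\ge2$, and therefore $\min\B_k(2)\ge\min\B_2(2)=q$ by Theorem \ref{th:12}(a); combined with $q\in\B_k(2)$ this yields $\min\B_k(2)=q$. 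I expect the only subtlety here to be notational — making precise the passage from ``a node with exactly two infinite branches'' to ``a real number with exactly two expansions'' — whereas the genuine difficulty of the theorem remains the finite-$k$ construction.
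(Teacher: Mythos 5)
Your overall architecture coincides with the paper's: the reduction $\B_k(2)\subseteq\B_2(2)$ for finite $k\ge 2$ by descending the expansion tree to a node with exactly two infinite branches (the paper phrases this as a linear transformation of $x$ combined with Lemma \ref{lem:26}), the analysis of the point $1$ via $\al(q_2)=(20)^\f$ for the $\aleph_0$ part, and the inductive ``one univoque child, one child reducing to the previous gadget'' scheme for finite $k$. The one genuinely different piece is your treatment of $\B_{2^{\aleph_0}}(2)$: you construct an explicit horseshoe point $p=2q^2/(q^3-1)$ with periodic branches $(200)^\f$ and $(121)^\f$, which is correct (both return maps fix $p$ because $q^2=2q+1$ gives $q^2+2q+1=2q^2$) and more constructive than the paper's appeal to the fact that almost every point has a continuum of expansions; note that for a cardinality conclusion an embedded complete binary subtree already suffices, so you need not even verify that the intermediate digits are forced.

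The one real gap is that the heart of the theorem, $q_2\in\B_k(2)$ for every finite $k$, remains a plan rather than a proof: you never exhibit the gadget $A_k$, and you yourself locate ``essentially all the work'' in choosing the escape tails and ruling out spurious branching, which you defer. The paper's explicit choice is $x_k=(1(00)^{k-1}1^\f)_{q_2}$, which realizes your scheme exactly and collapses the feared bookkeeping to two observations: the identity $(10^\f)_{q_2}=(0210^\f)_{q_2}$ yields $x_{k+1}=(1(00)^{k}1^\f)_{q_2}=(021(00)^{k-1}1^\f)_{q_2}$; the digit-$1$ child $(00)^{k}1^\f$ lies in $\U_{q_2}'$ by Proposition \ref{prop:23}; and the digit-$0$ child $(21(00)^{k-1}1^\f)_{q_2}$ exceeds $1/q_2+2/(q_2(q_2-1))$, hence lies outside the switch region, so its leading $2$ is forced and it inherits exactly $k$ expansions from $x_k$. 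In particular your worry about the recursive child falling into the $a$--$b$ cycle never materializes for this choice. Until you commit to a concrete $A_k$ and run that verification, the membership $q_2\in\B_k(2)$ for $k\ge 3$ --- and with it the ``smallest element of $\B_k(2)$'' claim --- is not established.
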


The paper is arranged in the following way. In Section \ref{sec:unique expansion} we give  the detailed  descriptions of the unique expansions in small bases, which plays an important role in the investigation of  the smallest base $q_2$. The proof of Theorem \ref{th:12} will be presented in Section   \ref{sec:smallest base with M even} for $M=2m$  and in  Section \ref{sec:smallest base with M odd} for $M=2m-1$, respectively. In Section \ref{sec:multiple expansions} we prove Theorem \ref{th:13} and consider   some  questions on multiple expansions with multiple digits.

\section{unique expansions}\label{sec:unique expansion}
Let $M\ge 1$ and $q\in(1,M+1]$. It is well-known that almost every real number $x\in I_q$
has a continuum of $q$-expansions (cf.~\cite{Sidorov_2003, Dajani_DeVries_2007}). Surprisingly,
 in 1991 Erd\H{o}s   et al. \cite{Erdos_Horvath_Joo_1991} discovered that there exists infinitely many reals having a unique $q$-expansion.
  For more information on unique expansions we refer to \cite{Komornik_Loreti_2007, Allouche_Frougey_Hare_2007, DeVries_Komornik_2008, DeVries_Komornik_2011,Komornik_Kong_Li_2015_1, Kong_Li_2015}, and surveys \cite{Komornik_2011, Sidorov_2003}.

 For $q\in(1,M+1]$ let
 $\al(q)=(\al_i(q))$
 be the \emph{quasi-greedy} $q$-expansion of $1$ (cf.~\cite{Daroczy_Katai_1993}), i.e., the lexicographically largest infinite $q$-expansion of $1$. Here an expansion $(d_i)$ is called \emph{infinite} if $d_i\ne 0$ for infinitely many   $i\ge 1$.
 In this paper we will use lexicographical order between sequences and blocks.

For $q>1$ let $\U_q$ be the set of $x\in I_q$ having a unique $q$-expansion, and let $\U'_q$ be the set of corresponding expansions.
The following lexicographical characterization of  $\U_q'$ was essentially due to Parry \cite{Parry_1960} (see also, \cite{Baiocchi_Komornik_2007}).
\begin{lemma}\label{prop:21}
Let $q\in(1,M+1]$. Then an expansion $(d_i)\in\U_q'$ if and only if
\begin{equation*}
\left\{\begin{array}{lll}
  d_{n+1}d_{n+2}\cdots<\al(q) & \textrm{whenever} & d_n<M, \\
   d_{n+1}d_{n+2}\cdots>\overline{\al(q)} & \textrm{whenever} & d_n>0.
\end{array}\right.
\end{equation*}

 Moreover, the map $q\ra\al(q)$ is strictly increasing from $(1, M+1]$ onto the set of infinite sequences $(\ga_i)$ satisfying
 $$
 \ga_{i+1}\ga_{i+2}\cdots\le\ga_1\ga_2\cdots\quad\textrm{for all}~~ i\ge 0.
 $$
\end{lemma}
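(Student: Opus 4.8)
The plan is to deduce the two-sided lexicographic condition from the classical one-sided characterization of \emph{greedy} expansions together with the reflection symmetry of the alphabet, and to treat the bijectivity statement separately. \emph{Step 1 (greedy/lazy reduction).} For $x\in I_q$ I first recall the greedy algorithm: put $x_0=x$ and, for $n\ge1$, $a_n=\min\set{M,\lfloor qx_{n-1}\rfloor}$ and $x_n=qx_{n-1}-a_n$; then $x_n=\sum_{j\ge1}a_{n+j}/q^j\in[0,M/(q-1)]$ is exactly the value of the tail $\si^n(a_i)=a_{n+1}a_{n+2}\cdots$. Inspecting the first index at which two $q$-expansions of $x$ disagree shows at once that the greedy expansion is the lexicographically largest $q$-expansion of $x$, and dually the lazy expansion (smallest admissible digit at each step) is the smallest. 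Hence $(d_i)\in\U'_q$ if and only if the greedy and lazy expansions of $\sum d_i/q^i$ coincide, i.e. if and only if $(d_i)$ is simultaneously greedy and lazy.

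\emph{Step 2 (greedy condition).} Next I would prove that $(a_i)$ is a greedy expansion if and only if $\si^n(a_i)<\al(q)$ whenever $a_n<M$. The same first-difference argument shows that $x\mapsto \text{greedy}(x)$ is strictly increasing for the lexicographic order, and since the tail of a greedy expansion is again greedy, $\si^n(a_i)$ is the greedy expansion of $x_n$. If $a_n<M$ then $a_n=\lfloor qx_{n-1}\rfloor$, so $x_n=\{qx_{n-1}\}<1$, and therefore $\si^n(a_i)$ is lexicographically smaller than the greedy expansion of $1$. The converse follows by running the greedy algorithm and checking that, under the stated condition, it reproduces $(a_i)$.

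\emph{Step 3 (passing to $\al(q)$, and reflection).} The one subtlety---and what I expect to be the main obstacle---is that Step 2 naturally produces a comparison with the \emph{greedy} expansion of $1$ rather than with the quasi-greedy $\al(q)$; these two are not literally equal in lexicographic order, yet value comparisons do not transfer to lexicographic comparisons in general. When the greedy expansion of $1$ is infinite the two coincide; when it is finite, say $b_1\cdots b_m0^\f$ with $b_m>0$, one has $\al(q)=(b_1\cdots b_{m-1}(b_m-1))^\f$, and I must upgrade ``$<\text{greedy}(1)$'' to ``$<\al(q)$''. This is precisely where I would invoke the periodicity of $\al(q)$ together with the fact, to be verified, that no greedy tail of value $<1$ can lie strictly between $\al(q)$ and $\text{greedy}(1)$. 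Granting the greedy characterization in the form $a_n<M\Rightarrow\si^n(a_i)<\al(q)$, the lazy case follows from the involution $e\mapsto\overline e=(M-e_i)$, which exchanges greedy and lazy expansions and sends $\al(q)$ to $\overline{\al(q)}$; thus ``$(d_i)$ is lazy'' becomes ``$d_n>0\Rightarrow\si^n(d_i)>\overline{\al(q)}$''. Combining the two conditions with Step 1 yields the characterization of $\U'_q$.

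\emph{Step 4 (the bijection).} For the final assertion, strict monotonicity of $q\mapsto\al(q)$ holds because enlarging $q$ strictly decreases the value $\sum\ga_i/q^i$ of any fixed digit string, forcing the lexicographically largest infinite expansion of $1$ to increase. For surjectivity onto the infinite sequences satisfying $\ga_{i+1}\ga_{i+2}\cdots\le\ga_1\ga_2\cdots$ for all $i\ge0$, given such a $(\ga_i)$ I would solve $\sum_i\ga_i/q^i=1$ for the unique $q>1$ (the left-hand side being continuous and strictly decreasing in $q$), check that $q\in(1,M+1]$, and verify that the self-admissibility $\ga_{i+1}\ga_{i+2}\cdots\le\ga_1\ga_2\cdots$ forces $(\ga_i)$ to be exactly the quasi-greedy expansion $\al(q)$; injectivity is then immediate from the strict monotonicity. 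This last verification, like Step 3, is where the interplay between value and lexicographic order must be handled with care.
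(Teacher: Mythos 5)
The paper offers no proof of this lemma at all: it is quoted from Parry and from Baiocchi--Komornik, so there is no in-paper argument to compare against. Your sketch reconstructs the standard proof from those references, and its architecture is sound: unique expansions are exactly the sequences that are simultaneously greedy and lazy, the greedy half is characterized lexicographically, and the lazy half follows from the reflection $e\mapsto\overline{e}$, which swaps greedy and lazy and sends $\al(q)$ to $\overline{\al(q)}$. You have also correctly located the two places where the substance lies; but as written both are deferred, so the proposal is an outline rather than a proof, and those two points are precisely the content of the lemma.

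Both gaps do close, along the lines you anticipate. For Step 3: if $d_n<M$ then the greedy remainder satisfies $x_n<1=\sum_i\al_i(q)q^{-i}$, which already rules out $\si^n(d)=\al(q)$; and when the greedy expansion of $1$ is finite, say $b_1\cdots b_m0^\f$, any sequence lying strictly between $\al(q)=(b_1\cdots b_{m-1}(b_m-1))^\f$ and $b_1\cdots b_m0^\f$ must begin with $b_1\cdots b_{m-1}(b_m-1)$ and have its $m$-th shift again strictly between them (here $b_m-1<M$, so the greedy condition applies at that position), whence by iteration the sequence equals $\al(q)$, a contradiction. For Step 4 (and also for the converse direction of Step 2, which you assert without argument), the missing ingredient is the comparison lemma converting lexicographic bounds into value bounds: if $\si^i(c)\le\al(q)$ for all $i\ge0$ then $\sum_i c_iq^{-i}\le1$, with strict inequality under the strict hypothesis. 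That lemma is what shows a self-admissible infinite expansion of $1$ must be the quasi-greedy one, and what shows that a sequence satisfying the stated conditions is reproduced by the greedy algorithm. You should state and prove it explicitly; with it in hand your plan is complete and agrees with the proof in the sources the paper cites.
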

Here for a sequence $(d_i)$ we denote by $\overline{(d_i)}=(M-d_i)$ the \emph{reflection} of $(d_i)$. Accordingly,  we write for $\overline{c_1\cdots c_n}=(M-c_1)\cdots (M-c_n)$ the
reflection of $c_1\cdots c_n$.

For $M\ge 1$ we recall from \cite{Baker_2014} that the generalized golden ratio $p_1=p_1(M)$ is given by
\begin{equation}\label{eq:21}
\al(p_1)=\left\{  \begin{array}{lll}
    m^\f & \textrm{if}& M=2m, \\
    (m\,(m-1))^\f & \textrm{if} & M=2m-1. \\
  \end{array}\right.
\end{equation}

The following  lemma for $q$-expansions with $q\in(1,p_1]$ was established by
\cite{Baker_2014, Ge_Tan_2015_4}.
\begin{lemma}
  \label{prop:22}
  If $q\in(1,p_1)$, then all numbers $x\in(0,M/(q-1))$ has a continuum of $q$-expansions. If $q=p_1$, any number $x\in(0,M/(q-1))$ has either a continuum of $q$-expansions or countably infinitely many  $q$-expansions.
\end{lemma}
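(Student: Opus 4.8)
The plan is to translate the statement into the language of the branch maps and their overlaps, and then to read off the number of expansions from how often a typical orbit can be forced to ``branch''. For a digit $d\in\set{0,1,\cdots,M}$ put $T_d(x)=qx-d$; this maps $I_q$ back into $I_q$ precisely when $x\in\Delta_d:=[d/q,(d+M/(q-1))/q]$, and a sequence $(d_i)$ is a $q$-expansion of $x$ exactly when $T_{d_n}\circ\cdots\circ T_{d_1}(x)\in I_q$ for every $n$. The intervals $\Delta_0,\cdots,\Delta_M$ cover $I_q$, and the overlaps $S_d:=\Delta_d\cap\Delta_{d+1}$ are the \emph{switch regions}: an interior point admits a genuine choice of digit exactly when it lies in some $S_d$. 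A direct computation gives $|S_d|=(M-q+1)/(q(q-1))$ and shows that the forced gaps between consecutive switch regions are nonempty iff $q>1+M/2$. First I would record that $x$ has more than one $q$-expansion iff its orbit enters $\bigcup_d S_d$ at least once, and that $x$ has a continuum of $q$-expansions as soon as its orbit can be steered back into $\bigcup_d S_d$ infinitely often, since each such return is an independent binary choice and produces a Cantor set of expansions.

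For the even case $M=2m$ the threshold is transparent: by \eqref{eq:21} one has $p_1=m+1=1+M/2$, which is exactly the value at which the forced gaps close up. Hence for every $q\in(1,p_1)$ the switch regions cover all of $I_q$ with overlap, so from any interior point one can always make a further branching choice; iterating yields a continuum. At $q=p_1$ the gaps degenerate to the countably many endpoints of the $S_d$, and I would isolate these exceptional points: an orbit that forever lands on such touch points follows a single eventually periodic itinerary and contributes only countably many expansions, while every other interior point re-enters an open switch region infinitely often and again gets a continuum.

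The odd case $M=2m-1$ is where the real work lies, because now $p_1>1+M/2$, so genuine forced gaps persist throughout $(1+M/2,p_1)$ and one cannot argue by covering alone. Here the plan is to pass to the symbolic side and use the lexicographic characterization of Lemma~\ref{prop:21} together with the monotonicity of $q\mapsto\al(q)$. For $q<p_1$ we have $\al(q)<(m(m-1))^\f$ and $\overline{\al(q)}>((m-1)m)^\f$, and I would show that these two constraints squeeze the admissible set $\U_q'$ down to the two endpoint sequences $0^\f$ and $M^\f$ alone; equivalently, no interior orbit can avoid the switch regions forever, so every interior point branches, and in fact branches infinitely often, giving a continuum. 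At $q=p_1$ the periodic word $(m(m-1))^\f$ from \eqref{eq:21} exactly closes the escaping itinerary, so $\U_q'$ acquires new but only eventually periodic elements built from $(m(m-1))^\f$, $((m-1)m)^\f$ and their finite prefixes; I would enumerate this countable family and show that an interior point has countably many expansions precisely when its orbit is eventually trapped on this family, and a continuum otherwise.

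The hard part will be the odd case for $q<p_1$: proving that the lexicographic conditions of Lemma~\ref{prop:21}, driven by a value $\al(q)$ strictly below the critical periodic sequence, force $\U_q'=\set{0^\f,M^\f}$ despite the presence of nonempty forced gaps. This amounts to controlling the one-sided subshift of escaping sequences and verifying that it is trivial below $p_1$ and merely countable at $p_1$; the delicate point at the threshold is to rule out any interior point with only finitely many expansions, thereby pinning down the exact continuum-versus-countable dichotomy.
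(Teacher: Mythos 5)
First, a point of comparison: the paper does not prove this lemma at all --- it is quoted from Baker \cite{Baker_2014} and Ge--Tan \cite{Ge_Tan_2015_4}, so there is no in-paper argument to measure your proposal against. Judged on its own terms, your plan has the right geometric skeleton (branch maps, switch regions of length $(M-q+1)/(q(q-1))$, the threshold $q=1+M/2$ at which the gaps between consecutive switch regions close, and the observation that this threshold coincides with $p_1$ exactly when $M$ is even), and your intended use of Lemma \ref{prop:21} to show $\U_q'=\set{0^\f,\overline{0^\f}}$ for $q<p_1$ in the odd case is the correct mechanism. But the central step is asserted rather than proved, and as stated it fails.

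The gap is the passage from ``the orbit re-enters a switch region'' to ``a continuum of expansions''. A return to a switch region is \emph{not} an independent binary choice: if $y\in f_d(I_q)\cap f_{d+1}(I_q)$, the two continuations have values $qy-d\in[1,M/(q-1)]$ and $qy-d-1\in[0,M/(q-1)-1]$, and either of these can be an endpoint $0$ or $M/(q-1)$ of $I_q$, after which that branch has the unique continuation $0^\f$ or $\overline{0^\f}$ and dies. So ``every interior point branches at least once'' (equivalently, $\U_q=\set{0,M/(q-1)}$) only yields, by iterating on tails, that every interior point has infinitely many expansions; to get $2^{\aleph_0}$ you must produce a perfect subtree of the expansion tree, i.e.\ rule out that along some (or all) branches the orbit is eventually absorbed at an endpoint. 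That is precisely the nontrivial content of Baker's theorem, and it is also where your odd case for $q<p_1$ and your analysis at $q=p_1$ (where you must additionally exclude points with only \emph{finitely} many expansions) remain incomplete. Two further inaccuracies: the switch regions never cover all of $I_q$ --- the end segments $[0,1/q)$ and its reflection are uncovered, harmlessly so since the digit there is forced and the orbit expands into the covered part, but the claim as written is false; and the dichotomy ``countable versus continuum'' is best obtained for free from the fact that the set of expansions of a fixed $x$ is a closed subset of the compact sequence space (Cantor--Bendixson), which would let you reduce the lemma to the genuinely hard statements ``uncountable below $p_1$'' and ``infinite at $p_1$''.
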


In order to investigate the smallest base of $\B_2(M)$ we need the  detailed description of $\U_q'$ for small bases $q$. By Lemma \ref{prop:22} it suffices to consider $\U'_q$ for $q>p_1$.
For $M\ge 1$ let
\begin{equation} \label{eq:22}
\al(p_2)=\left\{  \begin{array}{lll}
      ((m+1)(m-1))^\f & \textrm{if}& M=2m, \\
     (mm(m-1)(m-1))^\f & \textrm{if} & M=2m-1. \\
  \end{array}\right.
\end{equation}

When $M$ is even, the following proposition for $\U_q'$  was implicitly shown in  \cite[Lemma 4.12]{Kong_Li_Dekking_2010}.
\begin{proposition}\label{prop:23}
 If $M=2m$, then for $p_1<q\le p_2$ we have
$$
\U_q'=\set{0^\f, \overline{0^\f}}\cup\bigcup_{k=0}^\f\bigcup_{u=0}^m \set{0^k u m^\f, \overline{0^k u m^\f}}.
$$
\end{proposition}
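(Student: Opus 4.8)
The plan is to read off $\U_q'$ directly from the Parry-type characterization in Lemma \ref{prop:21}. Since $p_1<q\le p_2$ and $q\ra\al(q)$ is strictly increasing (Lemma \ref{prop:21}), the definitions \eqref{eq:21} and \eqref{eq:22} give
\[
m^\f=\al(p_1)<\al(q)\le\al(p_2)=((m+1)(m-1))^\f ,
\]
and, taking reflections (which reverse the lexicographic order),
\[
((m-1)(m+1))^\f=\overline{\al(p_2)}\le\overline{\al(q)}<\overline{\al(p_1)}=m^\f .
\]
These two displays are the only information about $q$ that I will use, which is exactly why $\U_q'$ is constant on $(p_1,p_2]$. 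I would also record once that $\U_q'$ is invariant under the reflection $(d_i)\mapsto(\overline{d_i})$: the two requirements of Lemma \ref{prop:21} are interchanged by reflection since $\overline{\overline{\al(q)}}=\al(q)$ and reflection reverses order. This halves the work, because the stated set is reflection-symmetric. It then remains to prove the two inclusions.

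For the inclusion $\supseteq$ I would substitute each candidate into Lemma \ref{prop:21}. For $0^k u\, m^\f$ with $0\le u\le m$, each tail following a non-$M$ digit is one of $0^ju\,m^\f\ (j\ge0)$ or $m^\f$, all of which are $<\al(q)$ because $\al(q)>m^\f$ and $u\le m$; each tail following a positive digit equals $m^\f>\overline{\al(q)}$. The sequences $0^\f,M^\f$ and the reflected family are handled identically (or by reflection invariance). This step is routine.

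The substance is the inclusion $\subseteq$, whose heart is the following claim, which I expect to be the main obstacle: \emph{if $(d_i)\in\U_q'$ has an interior digit $0<d_n<M$, then $d_{n+1}d_{n+2}\cdots=m^\f$.} Since $d_n$ is interior, both requirements of Lemma \ref{prop:21} apply at $n$, and together with the bounds above they sandwich the tail,
\[
((m-1)(m+1))^\f<d_{n+1}d_{n+2}\cdots<((m+1)(m-1))^\f ,
\]
which forces $d_{n+1}\in\{m-1,m,m+1\}$. The values $m\pm1$ are excluded by one further lexicographic step: if $d_{n+1}=m+1$, the upper bound forces $d_{n+2}d_{n+3}\cdots<((m-1)(m+1))^\f$, contradicting $d_{n+2}d_{n+3}\cdots>\overline{\al(q)}\ge((m-1)(m+1))^\f$ (available since $d_{n+1}>0$); the case $d_{n+1}=m-1$ is symmetric, using $d_{n+1}<M$. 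Hence $d_{n+1}=m$, which is again interior, and induction yields $m^\f$. One must treat $m=1$ by hand, where $m\pm1\in\{0,M\}$ and only one requirement is available at $n+1$, but the same contradiction persists.

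With this claim the rest is structural. A short separate argument shows no block $0M$ or $M0$ occurs: after a $0$ the tail must be $<\al(q)\le((m+1)(m-1))^\f$, which rules out a following $M=2m$ (for $m=1$ one again invokes the lower requirement at the next position). Hence, letting $n_0$ be the first interior position, the prefix $d_1\cdots d_{n_0-1}$ lies in $\{0,M\}^{*}$ and is constant, so $(d_i)$ equals $0^\f$, $M^\f$, $0^k u'm^\f$, or $M^k u'm^\f$ with $u'$ interior. Finally the single constraint from the last prefix digit, namely $u'm^\f<\al(q)$ in the $0$-prefix case, forces $u'\le m$ when $k\ge1$, and reflection gives $u'\ge m$ in the $M$-prefix case; for $k=0$ there is no such constraint and the first digit may be any interior value. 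A brief bookkeeping check — rewriting $0^{k}0\,m^\f=0^{k+1}m^\f$ and $0^k m\,m^\f=0^k m^\f$, together with their reflections — reconciles the family just obtained with the stated set, completing the proof.
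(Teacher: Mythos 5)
Your proof is correct and follows essentially the same route as the paper: both read $\U_q'$ off the lexicographic characterization of Lemma \ref{prop:21} using only the bounds $m^\f<\al(q)\le((m+1)(m-1))^\f$ and their reflections. The only material differences are presentational — you dispose of the cases $d_{n+1}=m\pm1$ by a one-step contradiction (the two sandwiching inequalities at position $n+2$ clash immediately), where the paper iterates to show the tail would have to be $((m+1)(m-1))^\f$ before contradicting $\al(q)\le\al(p_2)$, and your ``no $0M$ or $M0$ block'' bookkeeping replaces the paper's split on the first nonzero digit.
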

\begin{proof}
First we consider the $``\supseteq"$ part. Note that $q>p_1$. Then
by (\ref{eq:21})  and Lemma \ref{prop:21} it
follows that
$$
\al(q)>\al(p_1)=m^\f.$$
Therefore,  the $``\supseteq"$ part can be verified by using Lemma \ref{prop:21}.


Now we consider the $``\subseteq"$ part.   Take $(d_i)\in\U_q'$ with $q\in(p_1,p_2]$. By symmetry we may assume that $d_1<M$. Apart from the trivial case that $(d_i)=0^\f$ let $n\ge 1$ be the least integer such that $d_n> 0$.
   Now we split the proof into the following two cases: (I) $n=1$; (II) $n>1$.

  Case (I). $n=1$. Then $0<d_n<M$. Note by (\ref{eq:22}) and Lemma \ref{prop:21} that
  \begin{equation}\label{eq:23}
  \al(q)\le\al(p_2)=((m+1)(m-1))^\f.
  \end{equation}
  Then by Lemma \ref{prop:21} it follows that
  $$d_{n+1}\in\set{m-1,m,m+1}.$$
  We claim $d_{n+1}d_{n+2}\cdots=m^\f$ by the following
  observations.
  \begin{itemize}
    \item If $d_{n+1}=m-1$, then  by using $d_n>0$ and (\ref{eq:23}) in Lemma \ref{prop:21} it follows that
    $$d_{n+1}d_{n+2}\cdots>\overline{\al(q)}\ge\overline{\al(p_2)}=((m-1)(m+1))^\f,$$
    which implies $d_{n+2}\ge m+1$.

     On the other hand, by using $d_{n+1}=m-1<M$ and (\ref{eq:23}) in  Lemma \ref{prop:21} we obtain
    $$
    d_{n+2}d_{n+3}\cdots<\al(q)\le\al(p_2)=((m+1)(m-1))^\f.
    $$
    Therefore, $d_{n+2}=m+1$.
    \item  If $d_{n+1}=m+1$, then by using $d_{n}<M$ and (\ref{eq:23}) in Lemma \ref{prop:21} it follows that
    $$
    d_{n+1}d_{n+2}\cdots<\al(q)\le\al(p_2)=((m+1)(m-1))^\f,
    $$
    which implies $d_{n+2}\le m-1$.

    On the other hand, by using $d_{n+1}=m+1>0$ and (\ref{eq:23}) in Lemma \ref{prop:21} it follows that
    $$
    d_{n+2}d_{n+3}>\overline{\al(q)}\ge\overline{\al(p_2})=((m-1)(m+1))^\f.
    $$
    Therefore, $d_{n+2}=m-1$.
   \end{itemize}

   By iteration of the above two arguments we conclude that if $d_{n+1}\in\set{m-1,m+1}$, then
   $(d_i)$ will eventually end with $((m+1)(m-1))^\f$, leading to a contradiction with Lemma \ref{prop:21} and (\ref{eq:23}).

   Hence,
\begin{equation}\label{eq:24}
(d_i)=d_1 m^\f\quad\textrm{ with}\quad0<d_1<M.
\end{equation}

Case (II). $n>1$. Since $d_{n-1}=0$, we have by using (\ref{eq:23}) in Lemma \ref{prop:21} that $d_{n}\in\set{1,\cdots,m+1}$. If $d_n=m+1$, then by the same arguments as in Case I it follows that
$$
(d_i)=0^{n-1}((m+1)(m-1))^\f,
$$
leading to a contradiction with Lemma \ref{prop:21} and (\ref{eq:23}).

Then  $0<d_n\le m<M$. In a similar way as in Case I we conclude that
\begin{equation}\label{eq:25}
(d_i)=0^{n-1}d_n m^\f\quad\textrm{ with}\quad 0<d_n\le m.
\end{equation}

By (\ref{eq:24}) and (\ref{eq:25}) we establish the $``\subseteq"$ part.
\end{proof}
When $M$ is odd, the following characterization of the set $\U_q'$
  was implicitly given in \cite[Proposition
13]{Glendinning_Sidorov_2001}.
\begin{proposition}
  \label{prop:24}
If $M=2m-1$, then for $p_1<q\le p_2$ we have
\begin{equation*}
\begin{split}
\U_q'=&\{0^\f,\overline{0^\f}\}\cup\bigcup_{k=0}^\f\bigcup_{u=0}^{m-1}\set{0^k u (m(m-1))^\f, 0^k u ((m-1)m)^\f}\\
\cup&\bigcup_{k=0}^\f\bigcup_{u=0}^{m-1}\set{  \overline{0^k u (m(m-1))^\f}, \overline{0^k u ((m-1)m)^\f}}.
\end{split}
\end{equation*}
\end{proposition}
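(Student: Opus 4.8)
The plan is to mirror the proof of Proposition \ref{prop:23}, treating the two inclusions separately; throughout I assume $m\ge 2$, the case $m=1$ (that is $M=1$) being covered by Theorem \ref{th:11}. For the ``$\supseteq$'' part I would verify directly, via Lemma \ref{prop:21}, that each listed sequence satisfies the two admissibility inequalities. Since $q>p_1$ gives $\al(q)>\al(p_1)=(m(m-1))^\f$ and hence $\overline{\al(q)}<\overline{\al(p_1)}=((m-1)m)^\f$, the periodic tails $(m(m-1))^\f=\al(p_1)$ and $((m-1)m)^\f=\overline{\al(p_1)}$, together with their shifts, all lie strictly between $\overline{\al(q)}$ and $\al(q)$, so both conditions of Lemma \ref{prop:21} hold at every interior position; the leading $0$'s (resp. $M$'s) need only the first (resp. second) inequality, which is immediate. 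As the displayed set and the characterization in Lemma \ref{prop:21} are both invariant under the reflection $(d_i)\mapsto\overline{(d_i)}$, it suffices to check the families $\set{0^k u(m(m-1))^\f}$ and $\set{0^k u((m-1)m)^\f}$, the reflected families following for free.

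For the ``$\subseteq$'' part, fix $(d_i)\in\U_q'$ with $p_1<q\le p_2$. Using reflection symmetry I may assume $d_1<M$, and apart from the trivial case $(d_i)=0^\f$ I let $n\ge 1$ be least with $d_n>0$. The heart of the argument is the claim that whenever a digit $d_j$ is \emph{interior}, i.e.\ $0<d_j<M$, the tail $d_{j+1}d_{j+2}\cdots$ equals $(m(m-1))^\f$ or $((m-1)m)^\f$. To prove it I first observe, from $\al(q)\le\al(p_2)=(mm(m-1)(m-1))^\f$ and $\overline{\al(q)}\ge\overline{\al(p_2)}=((m-1)(m-1)mm)^\f$ in Lemma \ref{prop:21}, that an interior digit forces the next digit into $\set{m-1,m}$; since for $m\ge 2$ both $m-1$ and $m$ are themselves interior, an induction shows that \emph{every} digit of the tail lies in $\set{m-1,m}$ and that every suffix $t$ obeys $\overline{\al(q)}<t<\al(q)$.

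The main obstacle is to rule out ``doublings''. If the tail is not one of the two alternating sequences it contains a block $mm$ or, by reflection, $(m-1)(m-1)$; I treat $mm$. The constraint that a suffix beginning $mm\cdots$ be $<\al(q)\le(mm(m-1)(m-1))^\f$ forces $\al(q)=mm\ga$ with $\ga\le\overline{\al(p_2)}$, and propagates the block to $mm(m-1)(m-1)$. The second admissibility inequality attached to the second $m$ then gives, for the ensuing suffix $t'$, the bound $t'>\overline{\ga}$. The crux is the period-four reflection identity
\[
\al(p_2)=(mm(m-1)(m-1))^\f=mm\,\overline{\al(p_2)},
\]
which, via $\ga\le\overline{\al(p_2)}$, yields $\overline{\ga}\ge\al(p_2)\ge\al(q)$; hence $t'>\al(q)$, contradicting the first admissibility inequality since the digit preceding $t'$ is $m-1<M$. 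Thus no doubling occurs and the claim follows. I expect this propagation-plus-reflection step to be the delicate point, precisely because for $q<p_2$ one has only $\al(q)<\al(p_2)$ and the argument must not secretly assume $q=p_2$; the identity above is what makes the contradiction uniform in $q$.

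Finally I would assemble the cases. If $n=1$ then $0<d_1<M$ and the claim gives $(d_i)=d_1(m(m-1))^\f$ or $d_1((m-1)m)^\f$; for $d_1\le m-1$ this lies in the first two families, while for $m\le d_1<M$ its reflection has leading digit $M-d_1\le m-1$ and so lies there, whence $(d_i)$ lies in the reflected families. If $n>1$ then $d_{n-1}=0<M$ forces $d_n\le m$ by Lemma \ref{prop:21}, and the claim applies to the interior digit $d_n$. When $d_n\le m-1$ we land directly in $\set{0^{n-1}d_n(m(m-1))^\f,\,0^{n-1}d_n((m-1)m)^\f}$; when $d_n=m$, the inequality $m(m(m-1))^\f>\al(p_2)\ge\al(q)$ excludes the tail $(m(m-1))^\f$ (it would violate the first inequality at position $n-1$), leaving $(d_i)=0^{n-1}m((m-1)m)^\f=0^{n-1}(m(m-1))^\f$, which is the $u=0$ (hence $k=n-2$) member of the first family. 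This exhausts all cases and establishes the ``$\subseteq$'' inclusion.
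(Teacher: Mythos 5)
Your proof is correct and follows essentially the same route as the paper's: the $\supseteq$ inclusion via Lemma \ref{prop:21}, and the $\subseteq$ inclusion by forcing all tail digits after an interior digit into $\set{m-1,m}$ and then ruling out a doubled block $mm$ (or $(m-1)(m-1)$). Your one-step contradiction via the identity $\al(p_2)=mm\,\overline{\al(p_2)}$ is a slightly cleaner finish than the paper's iteration to a tail ending in $(mm(m-1)(m-1))^\f$, but the substance and structure are the same.
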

\begin{proof}
For $m=1$ the proposition was established by Glendinning and Sidorov \cite{Glendinning_Sidorov_2001}. In the following we will assume $m\ge 2$.

The $``\supseteq"$ part can be easily verified by using Lemma \ref{prop:21} and (\ref{eq:21}). Then it suffices to prove the $``\subseteq"$ part.

  Take $(d_i)\in\U_q'$ with $q\in(p_1,p_2]$. By symmetry we assume $d_1<M$. Excluding  the trivial case that $(d_i)=0^\f$, let $n\ge 1$ be the least integer such that $d_n>0$.
   We split the proof into the following two cases: (I) $n=1$; (II) $n>1$.

  Case (I). $n=1$. Then $0<d_n<M$. Note by (\ref{eq:22}) and Lemma \ref{prop:21} that
  \begin{equation}
    \label{eq:26}
    \al(q)\le\al(p_2)=(mm(m-1)(m-1))^\f.
  \end{equation}
  By Lemma \ref{prop:21} it follows that $d_{n+1}\in\set{m-1,m}$. We claim that $d_{n+1}d_{n+2}\cdots$ equals $(m(m-1))^\f$ or its reflection $((m-1)m)^\f$.
  \begin{itemize}
    \item If $d_{n+1}d_{n+2}=(m-1)(m-1)$, then by using $d_n>0$ and (\ref{eq:26}) in Lemma \ref{prop:21} it follows that
    \begin{equation*}
    d_{n+1}d_{n+2}\cdots>\overline{\al(q)}\ge\overline{\al(p_2)}=((m-1)(m-1)mm)^\f,
    \end{equation*}
    which implies $d_{n+3}d_{n+4}\ge mm$.

     On the other hand, by using $d_{n+2}=m-1<M$ and (\ref{eq:26}) in Lemma \ref{prop:21} we have
 \begin{equation*}
    d_{n+3}d_{n+4}\cdots<\al(q)\le \al(p_2)=(mm(m-1)(m-1))^\f.
\end{equation*}
    Therefore, $d_{n+3}d_{n+4}=mm$.

    \item  If $d_{n+1}d_{n+2}=mm$, then by using $d_n<M$ and (\ref{eq:26}) in Lemma \ref{prop:21} it follows that
    \begin{equation*}
      d_{n+1}d_{n+2}\cdots<\al(q)\le\al(p_2)=(mm(m-1)(m-1))^\f,
    \end{equation*}
    which implies $d_{n+3}d_{n+4}\le (m-1)(m-1)$.

     On the other hand, by using $d_{n+2}=m>0$ and (\ref{eq:26}) in Lemma \ref{prop:21} it gives that
   \begin{equation*}
    d_{n+3}d_{n+4}\cdots>\overline{\al(q)}\ge\overline{\al(p_2)}=((m-1)(m-1)mm)^\f.
 \end{equation*}
    Therefore, $d_{n+3}d_{n+4}=(m-1)(m-1)$.
  \end{itemize}

  Hence, by the above arguments it follows that if
   $d_{n+1}d_{n+2}=mm$ or its reflection  $(m-1)(m-1),$  then $(d_i)$ will eventually end with $(mm(m-1)(m-1))^\f$, leading to a contradiction with Lemma \ref{prop:21} and (\ref{eq:26}).

  Therefore,
\begin{equation}\label{eq:27}
  (d_i)=d_1(m(m-1))^\f\quad\textrm{or}\quad d_1((m-1)m)^\f,
  \end{equation}
  where $0<d_1<M$.

  Case (II). $n>1$. Then by using $d_{n-1}=0<M$ in Lemma \ref{prop:21} it follows that
  $$
  d_{n}, d_{n+1}\in\set{1,\cdots,m}.
  $$
  If $d_n=m$, then  $d_n>0$, and by using (\ref{eq:26}) in Lemma \ref{prop:21} it follows that $d_{n+1}\ge m-1$. By the same arguments as in Case I it follows that
\begin{equation}\label{eq:28}
  (d_{i})=0^{n-1}(m(m-1))^\f.
  \end{equation}

  If   $0<d_n<m\le M$, then by a similar way as in Case (I) we conclude that
\begin{equation}\label{eq:29}
  (d_i)=0^{n-1}d_n(m(m-1))^\f\quad\textrm{or}\quad 0^{n-1}d_n((m-1)m)^\f,
  \end{equation}
  where $0<d_n<m$.

  Therefore,  by (\ref{eq:27})--(\ref{eq:29}) we prove the $``\subseteq"$ part.
\end{proof}

At the end of this section we give a geometrical explanation of expansions in non-integer bases. For $k=0,1,\cdots , M$ and $q\in(1,M+1]$ let
\[f_k(x)=\frac{x+k}{q}.\]
 Then   the interval $I_q=[0, M/(q-1)]$ can be written as
\begin{equation}\label{eq:212}
 I_q=\bigcup _{k=0}^M f_k(I_q)=\bigcup_{k=0}^M \left [\frac{k}{q}, \frac{M}{q(q-1)}+\frac{k}{q}\right ].
 \end{equation}

 Therefore, a point $x\in I_q$ has a unique $q$-expansion $(x_i(q))$ if and only if all of its linear transformations  $\sum_{i=1}^\f x_{j+i}(q)/q^i$, $j\ge 0$, do not fall into the overlaps, i.e., for all $j\ge 0$ we have
 \[
\sum_{i=1}^\f \frac{x_{j+i}(q)}{q^i}\notin f_{k_1}(I_q)\cap f_{k_2}(I_q)\quad\textrm{for any}\quad k_1<k_2.
 \]
 On the other hand, if $\sum_{i=1}^\f x_{j+i}(q)/q^i\in f_{k_1}(I_q)\cap f_{k_2}(I_q)$ for some $j\ge 0$ and $k_1<k_2$. Then
 $x$ has at least two expansions: one begins with $x_1(q)\cdots x_j(q)k_1$ and the other starts at $x_1(q)\cdots x_j(q)k_2$.

 By (\ref{eq:212}) and the definition of $p_1$ in (\ref{eq:21}) one can easily verify the following lemma.
 \begin{lemma}\label{lem:26}
   Let $q>p_1$. Then
   $f_{k_1}(I_q)\cap f_{k_2}(I_q)\cap f_{k_3}(I_q)=\emptyset$
    for any $k_1<k_2<k_3$.
 \end{lemma}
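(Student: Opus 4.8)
The plan is to prove that for any $q>p_1$, the triple intersection $f_{k_1}(I_q)\cap f_{k_2}(I_q)\cap f_{k_3}(I_q)=\emptyset$ whenever $k_1<k_2<k_3$. By the description in (\ref{eq:212}), each $f_k(I_q)$ is the interval $\left[\frac{k}{q},\frac{M}{q(q-1)}+\frac{k}{q}\right]$, all of equal length $\ell:=\frac{M}{q(q-1)}$, and their left endpoints $\frac{k}{q}$ are spaced apart by $\frac{1}{q}$ as $k$ increases by $1$. Consecutive intervals $f_k(I_q)$ and $f_{k+1}(I_q)$ overlap precisely when the right endpoint of the former exceeds the left endpoint of the latter, i.e.\ when $\ell>\frac{1}{q}$. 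Since these intervals are nested in position (each shifted right by $\frac{1}{q}$ from the previous), the key quantitative fact is how large $\ell$ can be relative to the gap $\frac{1}{q}$.

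First I would compute the overlap length between $f_{k_1}(I_q)$ and $f_{k_2}(I_q)$ for $k_1<k_2$. The intersection is nonempty exactly when $\frac{k_2}{q}\le \frac{M}{q(q-1)}+\frac{k_1}{q}$, equivalently $(k_2-k_1)\le \frac{M}{q-1}$; and when nonempty it equals $\left[\frac{k_2}{q},\,\frac{M}{q(q-1)}+\frac{k_1}{q}\right]$, whose right endpoint is governed by the \emph{smaller} index and whose left endpoint is governed by the \emph{larger} index. Hence for a triple $k_1<k_2<k_3$ the triple intersection $f_{k_1}(I_q)\cap f_{k_2}(I_q)\cap f_{k_3}(I_q)$ is the interval $\left[\frac{k_3}{q},\,\frac{M}{q(q-1)}+\frac{k_1}{q}\right]$, which is nonempty if and only if $\frac{k_3}{q}\le \frac{M}{q(q-1)}+\frac{k_1}{q}$, i.e.\ $k_3-k_1\le \frac{M}{q-1}$. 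Since $k_1,k_2,k_3$ are distinct integers with $k_1<k_2<k_3$, the smallest possible value of $k_3-k_1$ is $2$. Therefore a triple intersection can be nonempty only if $2\le \frac{M}{q-1}$, that is, $q-1\le \frac{M}{2}$, which gives $q\le \frac{M}{2}+1=\frac{M+2}{2}$.

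The main task, then, is to show that $p_1\ge \frac{M+2}{2}$, so that $q>p_1$ forces $q>\frac{M+2}{2}$ and the triple intersection is empty. I would verify this directly from the definition of $p_1$ in (\ref{eq:21}) via Lemma \ref{prop:21}, which identifies $\al(p_1)$ with the quasi-greedy expansion of $1$ in base $p_1$. When $M=2m$ we have $\al(p_1)=m^\f$, so $1=\sum_{i=1}^\f m/p_1^i=\frac{m}{p_1-1}$, giving $p_1=m+1=\frac{M}{2}+1=\frac{M+2}{2}$; thus $q>p_1$ yields $q>\frac{M+2}{2}$ strictly, so $k_3-k_1\le\frac{M}{q-1}<2$ is impossible and the intersection is empty. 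When $M=2m-1$ we have $\al(p_1)=(m(m-1))^\f$, whence $1=\frac{mp_1+(m-1)}{p_1^2-1}$; solving shows $p_1=\frac{m+\sqrt{m^2+4(m-1)}}{2}$, and a short estimate gives $p_1>\frac{M+2}{2}=\frac{2m+1}{2}$, again forcing the desired emptiness.

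The only real obstacle is the odd-$M$ endpoint estimate $p_1>\frac{2m+1}{2}$: one must confirm the strict inequality rather than mere nonstrictness, since the borderline $q=\frac{M+2}{2}$ (which occurs exactly as $p_1$ in the even case) would produce a degenerate single-point triple intersection. I expect this to be settled by a clean algebraic check—squaring $\sqrt{m^2+4m-4}>m+1$ reduces to $2m-5>0$, so one handles the small cases $m=1,2$ separately and notes that in those cases the single-point degeneracy does not affect the conclusion, or reorganizes the inequality to avoid it entirely. Everything else is a routine interval computation from (\ref{eq:212}).
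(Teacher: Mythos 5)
Your overall strategy is exactly what the paper intends: the paper gives no proof of this lemma, stating only that it follows from (\ref{eq:212}) and the definition of $p_1$ in (\ref{eq:21}), and your reduction --- the triple intersection is nonempty iff $k_3-k_1\le M/(q-1)$, hence iff $2\le M/(q-1)$, so it suffices to show $p_1\ge (M+2)/2$ --- is the natural way to carry out that verification. The even case is handled correctly.

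In the odd case, however, you have an algebra slip that manufactures a spurious difficulty. From your own identity $1=\bigl(mp_1+(m-1)\bigr)/(p_1^2-1)$ one gets $p_1^2-1=mp_1+m-1$, i.e.\ $p_1^2=mp_1+m$, so
$$p_1=\frac{m+\sqrt{m^2+4m}}{2},$$
which is the value recorded in (\ref{e41}) of the paper --- not $\frac{m+\sqrt{m^2+4m-4}}{2}$ as you wrote. With the correct value, the required strict inequality $p_1>\frac{2m+1}{2}$ is equivalent to $\sqrt{m^2+4m}>m+1$, i.e.\ to $2m>1$, which holds for every $m\ge 1$ with no exceptions. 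So the separate treatment of $m=1,2$ and the discussion of a ``degenerate single-point triple intersection'' are artifacts of the miscomputation and can be deleted; you should also make explicit the final step that $q>p_1\ge(M+2)/2$ gives $M/(q-1)<2$, so $k_3-k_1\ge 2$ is impossible. Once the formula for $p_1$ is corrected, the argument is complete and uniform in $m$.
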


\section{smallest base  of $\B_2(M)$ with $M=2m$}\label{sec:smallest base with M even}
In this section we will consider the smallest base $q_2(M)$ of $\B_2(M)$ with  $M=2m$.
For $q>1$ and an expansion $(d_i)$ we set
$$
((d_i))_q:=\sum_{i=1}^\f\frac{d_i}{q^i}.
$$
Recall from (\ref{eq:21}) and (\ref{eq:22}) that
\begin{equation}\label{eq:31}
p_1=m+1,\quad p_2=\frac{m+1+\sqrt{m^2+6m+1}}{2}.
\end{equation}

\begin{lemma}\label{lem:31}
Let $M=2m$ and $q\in(p_1,p_2]$. Then $q\in\B_2(M)$ if and only if $q$ is a root  of
$$
(10^k u m^\f)_q=(0\overline{0^j v m^\f})_q,
$$
for some $k,j= 0,1,\cdots$ and $u,v\in\{0,\cdots,m\}$.
\end{lemma}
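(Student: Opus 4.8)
The plan is to characterize $\B_2(M)$ in terms of the structure of points having exactly two $q$-expansions, using the geometric picture from (\ref{eq:212}) and the description of $\U_q'$ from Proposition \ref{prop:23}. First I would observe that by Lemma \ref{lem:26}, for $q>p_1$ no point lies in a triple overlap, so every point with multiple expansions has exactly two \emph{branches} at each level where its transformation falls into an overlap. A point $x$ has exactly two $q$-expansions precisely when exactly one of its transformations $\sum_{i\ge 1} x_{j+i}(q)/q^i$ lands in a (double) overlap $f_{k_1}(I_q)\cap f_{k_2}(I_q)$, and moreover both continuations after that split must be \emph{unique} expansions. This is the conceptual heart: a unique split followed by two unique tails gives exactly $2$ expansions, whereas a second split somewhere would give at least $4$.

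Next I would formalize this. Suppose $x$ has exactly two expansions. Let $j$ be the first index where the branching occurs, so $\sum_{i\ge 1} x_{j+i}(q)/q^i$ lies in the overlap of $f_{k_1}$ and $f_{k_2}$ with $k_1<k_2$. Since $q\le p_2$, the overlap structure is tight, and I expect the two digits differ by exactly $1$, i.e.\ $k_2=k_1+1$, with the point sitting at the shared boundary value. The two continuations, one starting with digit $k_1$ and one with $k_2$, must each be a unique expansion lying in $\U_q'$; otherwise further branching produces more than two expansions. Normalizing by symmetry (reflection) and shifting, I would reduce to the case where the overlapping value equals the common endpoint, so that the continuation beginning with the smaller digit has the form $10^k u m^\f$ (a unique expansion from Proposition \ref{prop:23}, shifted), while the continuation beginning with the larger digit, after reflection, has the form $0\,\overline{0^j v m^\f}$. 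Setting these two real numbers equal is exactly the stated equation
$$
(10^k u m^\f)_q=(0\,\overline{0^j v m^\f})_q.
$$

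For the converse direction, given $q$ a root of that equation for some admissible $k,j,u,v$, I would construct a witness $x$ explicitly: take $x$ to be the common value, verify using Proposition \ref{prop:23} that both $10^k u m^\f$ and the reflected tail are in $\U_q'$ (hence genuinely unique), and check that these are the \emph{only} two expansions of $x$ by confirming no transformation other than the initial one hits an overlap. This uses the lexicographic characterization in Lemma \ref{prop:21} together with the explicit form of $\al(q)$ for $q\in(p_1,p_2]$.

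The main obstacle I anticipate is the bookkeeping in matching the overlap condition to the precise algebraic form $(10^k u m^\f)_q=(0\,\overline{0^j v m^\f})_q$, in particular pinning down why the two tails must be exactly of the unique-expansion shapes classified in Proposition \ref{prop:23} and why the leading digits are forced to be $1$ and $0$ (respectively $M$ and $M-1$ before reflection). This requires carefully tracking the boundary of the overlap $f_{k_1}(I_q)\cap f_{k_2}(I_q)$ and using the constraint $q\le p_2$ to rule out continuations that would introduce a third expansion. Once the shape of the two unique tails is established, equating their values and absorbing the leading digits into the $10^k$ and $0\,\overline{0^j}$ prefixes is routine.
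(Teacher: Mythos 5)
Your overall strategy coincides with the paper's: for $q>p_1$, Lemma \ref{lem:26} forces every branching to be a binary split between consecutive digits, so ``exactly two expansions'' means exactly one split followed by two tails lying in $\U_q'$, which Proposition \ref{prop:23} then classifies. The sufficiency direction as you sketch it works. In the necessity direction, however, you have named the hard step (``pinning down why the two tails must be exactly of the unique-expansion shapes \dots and why the leading digits are forced'') without supplying the argument, and two genuine gaps remain. First, Proposition \ref{prop:23} allows four shapes of tail --- $0^\f$, $\overline{0^\f}$, $0^k u m^\f$ and $\overline{0^j v m^\f}$ --- and nothing in your sketch rules out the two tails having the \emph{same} type, which would yield an equation such as $(10^k u m^\f)_q=(0\,0^j v m^\f)_q$ rather than the stated one. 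The paper's resolution is short but essential: if both tails were of the form $0^k u m^\f$, then the full sequence $0\,b_{n+1}b_{n+2}\cdots=0\,0^j v m^\f$ would itself belong to $\U_q'$ by Proposition \ref{prop:23}, yet its value admits a second expansion beginning with the digit $1$ --- a contradiction; symmetrically (prepending $M$ and $M-1$) when both tails are reflected. Second, you must exclude the degenerate tails $0^\f$ and $M^\f$ (i.e.\ $k=\f$ or $j=\f$), which are unique expansions but are not covered by the stated equation with $k,j=0,1,\cdots$; the paper does this by observing that such a tail would force $\al(q)=(2m)^j(2m-v)m^\f$, which is incompatible with $m^\f=\al(p_1)<\al(q)\le\al(p_2)=((m+1)(m-1))^\f$ via Lemma \ref{prop:21}.

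Two smaller inaccuracies: your reduction ``to the case where the overlapping value equals the common endpoint'' of the overlap is neither correct nor needed --- the common value $(10^k u m^\f)_q$ generally lies in the interior of $f_0(I_q)\cap f_1(I_q)$, and what forces the leading digits $1$ and $0$ is simply Lemma \ref{lem:26} giving $a_n=b_n+1$ at the first index of disagreement, after which one subtracts $b_n$ from both sides. Likewise ``a second split somewhere would give at least $4$'' should read ``at least $3$'' (the second split may occur on only one branch), though the conclusion you draw from it is unaffected.
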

\begin{proof}
First we point out that $\B_2(M)\cap(p_1,p_2]\ne\emptyset$. In fact, by  Proposition \ref{prop:23} and Lemma \ref{lem:26} it follows that
$$
x=(100m^\f)_{p_2}=(0 \overline{(m-1)m^\f})_{p_2}
$$
has exactly two $p_2$-expansions, i.e., $p_2\in\B_2(M)$.

For the sufficiency we assume that $(10^k u m^\f)_q=(0\overline{0^j v m^\f})_q$ for some $k,j= 0,1,\cdots$ and $u,v\in\{0,\cdots,m\}$. Then
\[x:= (10^k u m^\f)_q=(0\overline{0^j v m^\f})_q\]
has
at least two different $q$-expansions. Let $(x_i)$ be a $q$-expansion of $x$.
Then $x_1\in \set{0, 1}$ by Lemma \ref{lem:26}. When $x_1 = 1$,   by Proposition \ref{prop:23} it yields that $q x-1 =
(0^k u m^\f)_q$ has a unique $q$-expansion.   When $x_1 = 0$,   by Proposition \ref{prop:23} we also have that $qx = (\overline{0^jv m^\f})_q$
has a unique $q$-expansion. Thus $x$ has
exactly two different $q$-expansions and so $q\in\B_2(M)$.

Now we consider  the necessity.
Take $q \in(p_1,p_2]\cap\B_2(M)$.
Then there exists $x\in I_q$ which has exactly two different $q$-expansions $(a_i)$
and $(b_i)$, i.e.,
\begin{equation}\label{e31}
((a_i))_{q}=x=((b_i))_{q}.
\end{equation}

Let $n\ge 1$ be the least integer such that $a_n\ne b_n$. Without loss of generality we assume  $a_n> b_n$. Then
by (\ref{e31}) it follows
$$
(a_n a_{n+1}\cdots )_q=(b_nb_{n+1}\cdots )_q
\quad\textrm{and}\quad(a_{n+i}), ~(b_{n+i})\in\U_q'.
$$
By Lemma \ref{lem:26} we have $a_n=b_n+1$, and therefore
$$
\frac{1}{q}=\frac{1}{q}\sum _{k=1}^\infty \frac{b_{n+k}-a_{n+k}}{q^k}\leq \frac{b_{n+1}-a_{n+1}}{q^2}+\sum _{k=3}^\infty \frac{2m}{q^k}.
$$
This, together with  $q>p_1=m+1$, implies that $a_{n+1}<b_{n+1}$.
Hence,
\begin{equation}\label{eq:32}
(1a_{n+1}a_{n+2}\cdots )_q=(0b_{n+1}b_{n+2}\cdots )_q,
\end{equation}
where
$a_{n+1}<b_{n+1} $ and $(a_{n+i}), (b_{n+i})\in\U_q'.$

Now we claim  that $(a_{n+i})$ and $(b_{n+i})$ are of the forms $0^k u
m^\f$ and  $\overline{0^j v m^\f}$, respectively, where $k, j=0,1,\cdots$ and $u,v \in\set{0,1,\cdots,m}$. In terms of Proposition \ref{prop:23} it suffices to prove that    $(a_{n+i})$ and $(b_{n+i})$ can not be both of the form $0^k u
m^\f$ or  $\overline{0^j v m^\f}$.

\begin{itemize}
  \item If both $(a_{n+i})$ and $(b_{n+i})$ are of form $0^k u
m^\f$, then by (\ref{eq:32}) it gives that
$$
 (1a_{n+1}a_{n+2}\cdots )_q=(0b_{n+1}b_{n+2}\cdots )_q
$$
has at least two $q$-expansions. This leads to a contradiction, since by Proposition \ref{prop:23} that the number $(0b_{n+1}b_{n+2}\cdots )_q$ should have a unique $q$-expansion.
  \item If both $(a_{n+i})$ and $(b_{n+i})$ are of form $\overline{0^j v m^\f}$, then by (\ref{eq:32}) it follows that
$$
  (M a_{n+1}a_{n+2}\cdots )_q=((M-1)b_{n+1}b_{n+2}\cdots )_q
$$
has at least two $q$-expansions. This also leads to a contradiction, since by Proposition \ref{prop:23} that the number $ (M a_{n+1}a_{n+2}\cdots )_q$ should have a unique $q$-expansion.
\end{itemize}

Finally,     we show that $k,j\ne \infty $. Without loss of generality we only prove $k\ne \f$.

Suppose on the contrary that $k=\infty $.
 Then
 $
 1=((2m)^j(2m-v)m^\infty )_q
 $ for some $j=0,1,\cdots,\f$ and $v\in\set{0,1,\cdots,m}$.
 By Lemma \ref{prop:21} it follows that
 \[
 \alpha (q)=(2m)^j(2m-v)m^\infty.
 \]
 This leads to a contradiction with $p_1<q\le p_2$ and Lemma \ref{prop:21} that
 $$
m^\f=\al(p_1)<\al(q)\le \al(p_2)= ((m+1)(m-1))^\infty.
 $$
\end{proof}

In terms of Lemma \ref{lem:31} to find the smallest base $q_2$ of $\B_2(M)$ it suffices to consider the zeros in $(p_1,p_2]$ of the function
\begin{equation}\label{e33}
\begin{split}
 f_{k,j,u,v}(q)&=(q^3-q^2)\big((10^k u
m^\f)_q-(0\overline{0^j v m^\f})_q\big)\\
 & =-q-2mq+q^2+q^{-k}(m-u+u q)\\
 &\hspace{4cm}+q^{-j}(m-v+v q  ).
\end{split}
\end{equation}

\begin{lemma}\label{lem:32}
Given $k,j\ge 0$ and $u,v\in\{0,1,\cdots,m\}$, the function
 $f_{k,j,u,v}(q)$
is  strictly increasing in $(p_1,\f)$.
\end{lemma}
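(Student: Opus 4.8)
The plan is to prove the statement by showing that the derivative $f_{k,j,u,v}'(q)$ is strictly positive on $(p_1,\infty)=(m+1,\infty)$. First I would split off the tail contributions by writing
\[
f_{k,j,u,v}(q)=\big(q^2-(2m+1)q\big)+\phi(q;k,u)+\phi(q;j,v),\qquad \phi(q;k,u):=(m-u)q^{-k}+u\,q^{1-k},
\]
so that $f_{k,j,u,v}'(q)=\big(2q-(2m+1)\big)+\partial_q\phi(q;k,u)+\partial_q\phi(q;j,v)$. On $(m+1,\infty)$ the polynomial part contributes $2q-(2m+1)>1$, and the two tail derivatives are the only terms that can be negative; hence everything reduces to a single uniform lower bound for $\partial_q\phi$.

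The \emph{key step} will be the estimate
\[
\partial_q\phi(q;k,u)\ge -\frac{m}{q^2}\qquad\text{for all } q>m+1,\ k\ge 0,\ 0\le u\le m.
\]
I would prove it with the substitution $t=1/q\in(0,1/(m+1))$, under which $\phi=(m-u)t^k+u\,t^{k-1}$ and $\partial_q\phi=-t^2\,d\phi/dt$; since $t^2=1/q^2$, the claim is equivalent to the cleaner inequality $d\phi/dt\le m$. For $k=0$ one has $d\phi/dt=-u\,t^{-2}\le 0$, and for $k\ge 1$,
\[
\frac{d\phi}{dt}=k(m-u)\,t^{k-1}+(k-1)u\,t^{k-2}
\]
has nonnegative coefficients, so it is nondecreasing in $t$ and therefore bounded above by its value at $t=1/(m+1)$.

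At $t=1/(m+1)$ the expression is affine in $u$, so its maximum over $u\in[0,m]$ is attained at an endpoint. For $k=1$ the $u$-coefficient is negative and the maximum is at $u=0$, where the value equals exactly $m$; for $k\ge 2$ the $u$-coefficient is nonnegative (because $(m+1)(k-1)\ge k$) and the maximum is at $u=m$, where the value equals $m(k-1)(m+1)^{2-k}$. Thus the whole claim comes down to the elementary inequality $(k-1)\le(m+1)^{k-2}$ for $k\ge 2$, which holds with equality at $k=2$ and follows from $(m+1)^{k-2}\ge 2^{k-2}\ge k-1$ for $k\ge 3$. I expect this range $k\ge 2$ to be the main obstacle: bounding $m-u\le m$ and $u\le m$ simultaneously is too lossy, and one really must use that $d\phi/dt$ is affine in $u$ and optimize at the correct endpoint.

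Finally I would assemble the pieces. The estimate yields
\[
f_{k,j,u,v}'(q)\ge G(q):=2q-(2m+1)-\frac{2m}{q^2}
\]
for every admissible $k,j,u,v$. Since $G'(q)=2+4m/q^3>0$, the function $G$ is increasing, while
\[
G(m+1)=1-\frac{2m}{(m+1)^2}=\frac{m^2+1}{(m+1)^2}>0.
\]
Hence $G>0$ on $[m+1,\infty)$, so $f_{k,j,u,v}'(q)>0$ for all $q\in(m+1,\infty)$, which is precisely the asserted strict monotonicity of $f_{k,j,u,v}$ on $(p_1,\infty)$.
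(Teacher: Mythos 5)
Your proposal is correct and follows essentially the same route as the paper: differentiate, observe that the polynomial part contributes $2q-(2m+1)>1$ for $q>p_1=m+1$, and establish a uniform lower bound on each tail derivative $\partial_q\bigl(q^{-k}(m-u+uq)\bigr)$ (your bound $-m/q^2$ versus the paper's $-1/2$, proved there by factoring and using $u\le m$ rather than by your substitution $t=1/q$ and endpoint optimization in $u$). All of your estimates check out, including the borderline cases $(k,u)=(1,0)$ and $k=2$, so no changes are needed.
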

\begin{proof}
Differentiating $f_{k,j,u,v}$ in (\ref{e33})  it gives
\begin{equation*}
\begin{split}
 f_{k,j,u,v}'(q)&= -1-2m+2q+q^{-k}\left(  u-ku+
 \frac{ku-km}{q}\right) \\
 &\hspace{3.5cm}+q^{-j}\left( v-jv+
 \frac{jv-jm}{q}\right).
\end{split}
\end{equation*}
Since $q> p_1=m+1$, we have $-1-2m+2q> 1$. In order to guarantee the positivity of $f'_{k,j,u,v}(q)$, by symmetry it suffices to prove
\begin{equation}\label{e34}
q^{-k}\left( u-ku+
 \frac{ku-km}{q}\right)\ge -\frac{1}{2}
 \end{equation}
for any $k\ge 0$ and $u\in\set{ 0,\cdots,m}$.

Clearly,  the inequality (\ref{e34}) holds for $k=0$ or $1$. For $k\geq 2$ we have by using $q>p_1=m+1$ that
 $$1-k+\frac{k}{q}\le1-k+\frac{k}{m+1}\le 1-\frac{k}{2}\le 0,$$
  and therefore
\begin{equation*}
\begin{split}
q^{-k}\left( u-ku+
 \frac{ku-km}{q}\right)
&= -\frac{km}{q^{k+1}} +\frac{u}{q^{k}}\left( 1-k+\frac{k}{q} \right)\\
&\geq -\frac{km}{q^{k+1}}+\frac{m}{q^{k}}\left( 1-k+\frac{k}{q} \right)\\
&=\frac{m(1-k)}{q^k}>\frac{1-k}{q^{k-1}}\ge -\frac{1}{2}.
\end{split}
\end{equation*}
This establishes (\ref{e34}).
\end{proof}

\begin{lemma}
  \label{lem:33}
  The equation $f_{k,j,u,v}(q)=0$   has a unique root in $(p_1,\f)$ if and only if the parameters $(k,j,u,v)$ satisfies
  \begin{equation}\label{e35}
   \frac{u+1}{(m+1)^{k+1}}+\frac{v+1}{(m+1)^{j+1}}<1.
  \end{equation}
  \end{lemma}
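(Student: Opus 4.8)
The plan is to exploit the strict monotonicity just established in Lemma \ref{lem:32}. Since $f_{k,j,u,v}$ is continuous on $(p_1,\f)$ and strictly increasing there, it has \emph{at most} one root in $(p_1,\f)$; hence ``unique root'' is equivalent to ``a root exists'', and it remains only to decide when a root exists. For this I would examine the two boundary behaviours of $f_{k,j,u,v}$ on the interval. At the right end the quadratic term $q^2$ in (\ref{e33}) dominates the remaining bounded terms, so $f_{k,j,u,v}(q)\ra+\f$ as $q\ra\f$; in particular $f_{k,j,u,v}$ is eventually positive. Combining this with strict monotonicity and the intermediate value theorem, $f_{k,j,u,v}$ has a (necessarily unique) zero in $(p_1,\f)$ if and only if its value at the left endpoint is negative, i.e. $\lim_{q\ra p_1^+}f_{k,j,u,v}(q)<0$.

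The remaining step is a direct evaluation of this left-endpoint value. Plugging $q=p_1=m+1$ into (\ref{e33}) and using $m-u+u(m+1)=m(u+1)$ (and the analogous identity for $v$), I would obtain
\begin{equation*}
f_{k,j,u,v}(p_1)=-m(m+1)+\frac{m(u+1)}{(m+1)^{k}}+\frac{m(v+1)}{(m+1)^{j}}.
\end{equation*}
Dividing the inequality $f_{k,j,u,v}(p_1)<0$ by the positive quantity $m(m+1)$ converts it precisely into (\ref{e35}), establishing the desired equivalence. The strict inequality in (\ref{e35}) is exactly the right one: when equality holds in (\ref{e35}) the point $p_1$ itself is a zero of $f_{k,j,u,v}$, but $p_1\notin(p_1,\f)$ and $f_{k,j,u,v}$ remains positive to the right of $p_1$ by strict monotonicity, so no root lies in the open interval.

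There is no serious obstacle here: once Lemma \ref{lem:32} is in hand, the statement reduces to a sign computation at the single point $q=p_1$. The only minor points to watch are (i) that $f_{k,j,u,v}$ extends continuously to $q=p_1$, which is immediate since it is a finite combination of powers $q^2,q,q^{-k},q^{-j}$, all continuous at $q=m+1>0$, and (ii) the careful treatment of the equality boundary case noted above, which pins down why (\ref{e35}) is a \emph{strict} inequality.
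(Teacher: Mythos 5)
Your proposal is correct and follows essentially the same route as the paper: use the strict monotonicity from Lemma \ref{lem:32} (together with continuity and the fact that $f_{k,j,u,v}(q)\to+\infty$) to reduce the existence of a unique root in $(p_1,\infty)$ to the sign condition $f_{k,j,u,v}(p_1)<0$, and then evaluate $f_{k,j,u,v}(p_1)=m(m+1)\bigl(\tfrac{u+1}{(m+1)^{k+1}}+\tfrac{v+1}{(m+1)^{j+1}}-1\bigr)$, which is exactly the computation in the paper. Your explicit remarks on the behaviour at infinity and on the boundary equality case are sound refinements of the same argument.
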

\begin{proof}
  By Lemma \ref{lem:32} and the continuity of $f_{k,j,u,v}$ it follows that the equation $f_{k,j,u,v}(q)=0$   has a unique root in $(p_1,\f)$ if and only if
   $$f_{k,j,u,v}(p_1)<0.$$
   Hence, the lemma follows by using (\ref{eq:31}) in (\ref{e33}) that
  \begin{equation*}
    \begin{split}
      f_{k,j,u,v}(p_1) = {m}{(m+1)}\left(\frac{u+1}{(m+1)^{k+1}}+\frac{v+1}{(m+1)^{j+1}} -1\right).
    \end{split}
  \end{equation*}
\end{proof}

In terms of Lemma \ref{lem:33}    we denote by $q_{k,j,u,v}$ the unique root of $f_{k,j,u,v}(q)=0$ in $(p_1,\f)$, where $(k, j,u,v)$ satisfies (\ref{e35}).

\begin{lemma}\label{lem:34}
\begin{enumerate}
  \item  The sequence $(q_{k,j,u,v})$ is strictly increasing with respect to  the  parameters $k$ and $j$;
  \item The sequence $(q_{k,j,u,v})$ is strictly decreasing with respect to  the  parameters $u$ and $v$.
\end{enumerate}
    \end{lemma}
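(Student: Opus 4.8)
The plan is to prove both monotonicity claims by exploiting the explicit dependence of $f_{k,j,u,v}$ on its four parameters, together with the fact (Lemma \ref{lem:32}) that each such function is strictly increasing on $(p_1,\f)$. The key mechanism is this: if I can show that changing a parameter in a prescribed direction decreases the function $f_{k,j,u,v}(q)$ pointwise on $(p_1,\f)$ (while keeping the admissibility condition \eqref{e35}), then since each $f$ is strictly increasing and crosses zero exactly once, the root must shift to the right. More precisely, suppose $g$ and $h$ are two of these functions, both strictly increasing, with $g(q)<h(q)$ for all $q$ in the relevant range. If $q_g$ and $q_h$ denote their respective roots, then $h(q_g)>g(q_g)=0=h(q_h)$, and since $h$ is strictly increasing this forces $q_g>q_h$. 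So the entire argument reduces to comparing the functions pointwise.

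First I would handle the dependence on $u$ and $v$. By symmetry it suffices to treat $u$. Fix $k,j,v$ and compare $f_{k,j,u,v}$ with $f_{k,j,u+1,v}$ (assuming both satisfy \eqref{e35}). From the formula \eqref{e33}, the only $u$-dependent term is $q^{-k}(m-u+uq)$, so the difference is
\[
f_{k,j,u+1,v}(q)-f_{k,j,u,v}(q)=q^{-k}(q-1)>0
\]
for all $q>p_1>1$. Thus increasing $u$ strictly increases the function pointwise, hence by the root-comparison principle above strictly decreases the root. This gives part (2). The computation here is trivial, which is exactly why I would dispose of it first.

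Next I would treat the dependence on $k$ and $j$, which is the genuine content of the lemma. Again by symmetry fix $j,u,v$ and compare $f_{k,j,u,v}$ with $f_{k+1,j,u,v}$. The relevant difference involves only the term $q^{-k}(m-u+uq)$, giving
\[
f_{k,j,u,v}(q)-f_{k+1,j,u,v}(q)=\bigl(q^{-k}-q^{-k-1}\bigr)(m-u+uq)=q^{-k-1}(q-1)(m-u+uq).
\]
Since $q>p_1>1$, $u\ge 0$, and $m-u\ge 0$ (because $u\le m$), the factor $m-u+uq$ is nonnegative, and it is strictly positive unless $u=0$ and $m=0$, which cannot occur. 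Hence $f_{k,j,u,v}(q)>f_{k+1,j,u,v}(q)$ pointwise, so increasing $k$ strictly decreases the function and therefore strictly increases the root, proving part (1). I expect the main obstacle to be purely bookkeeping rather than analytic: I must verify that the admissibility condition \eqref{e35} is preserved under the parameter changes I compare (increasing $k$ or $u$ only tightens the left-hand side in the intended direction, so no new obstructions to the existence of roots arise), and I must state the root-comparison principle once cleanly so that both parts follow from the two displayed pointwise inequalities without repetition.
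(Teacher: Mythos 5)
Your proposal is correct and follows essentially the same route as the paper: the paper also compares $f_{k,j,u,v}$ and $f_{k+1,j,u,v}$ pointwise (using that $m-u+qu>0$ and $q>1$) and then invokes the strict monotonicity of Lemma \ref{lem:32} to conclude the roots are ordered, treating only the parameter $k$ explicitly and leaving the other cases to symmetry. Your explicit statement of the root-comparison principle and the separate (trivial) computation for $u$ are just slightly more detailed versions of the same argument.
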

\begin{proof}
Without loss of generality  we only give the proof for the parameter $k$.

 Fix $j\ge 0$ and $u, v\in\set{0,1,\cdots,m}$. We   write $q_k=q_{k,j,u,v}$. Then by (\ref{e33}) we have
 \begin{equation*}
  \begin{split}
 f_{k+1,j,u,v}(q_{k+1})&=-q_{k+1}-2mq_{k+1}+q_{k+1}^2+q_{k+1}^{-k-1}(m-u+q_{k+1}u)\\
 &\quad\quad+q_{k+1}^{-j}(m-v+q_{k+1}v)\\
&<-q_{k+1}-2mq_{k+1}+q_{k+1}^2+q_{k+1}^{-k}(m-u+q_{k+1}u)\\
&\quad\quad+q_{k+1}^{-j}(m-v+q_{k+1}v)\\
&=f_{k,j,u,v}(q_{k+1}),
  \end{split}
  \end{equation*}
  where the strictly   inequality holds since $m-u+q_{k+1}u> 0$.
This, together with $f_{k+1,j,u,v}(q_{k+1})=0=f_{k,j,u,v}(q_k)$, implies that
  $$
  f_{k,j,u,v}(q_k)<f_{k,j,u,v}(q_{k+1}).
  $$
Therefore, by Lemma \ref{lem:32} it follows that $q_{k}<q_{k+1}$.
\end{proof}
Now we consider the bases  in $\B_2(M)\cap(p_1,p_2]$ with $M=2m$.

\begin{theorem}\label{th:35}
Let $M=2m$. Then
$$
\B_2(M)\cap(p_1,p_2]=\set{q_{1,0,u,m-1}: u=0,1,\cdots,m-1}.
$$

Furthermore, the smallest base  of $\B_2(M)$
is
$$q_2(M)=q_{1,0,m-1,m-1}=\frac{m+1+\sqrt{ m^2+2m+5}}{2}.$$
\end{theorem}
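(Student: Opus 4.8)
The plan is to combine the three structural facts already established in this section. By Lemma~\ref{lem:31} together with Lemma~\ref{lem:33}, a base $q\in(p_1,p_2]$ lies in $\B_2(M)$ precisely when $q=q_{k,j,u,v}$ for some \emph{admissible} tuple $(k,j,u,v)$, i.e.\ one satisfying \eqref{e35} (note that $f_{k,j,u,v}(q)=0$ is equivalent to the defining equation of Lemma~\ref{lem:31}, since $q^3-q^2>0$ for $q>1$). Because each $f_{k,j,u,v}$ is strictly increasing on $(p_1,\f)$ by Lemma~\ref{lem:32} and vanishes at $q_{k,j,u,v}$, the additional requirement $q_{k,j,u,v}\le p_2$ is equivalent to the single scalar inequality $f_{k,j,u,v}(p_2)\ge 0$. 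Thus the theorem reduces to deciding, for every admissible tuple, the sign of $f_{k,j,u,v}(p_2)$. This sign is controlled by the quadratic identity $p_2^2=(m+1)p_2+m$ coming from $\al(p_2)=((m+1)(m-1))^\f$ in \eqref{eq:22}; substituting it into \eqref{e33} gives the closed form
\[
f_{k,j,u,v}(p_2)=p_2^{-k}\bigl(m+u(p_2-1)\bigr)+p_2^{-j}\bigl(m+v(p_2-1)\bigr)-m(p_2-1).
\]

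For the inclusion $\supseteq$ I would fix $k=1$, $j=0$, $v=m-1$ and $u\in\{0,\dots,m-1\}$. A direct substitution, again using $p_2^2=(m+1)p_2+m$, collapses the displayed expression to $p_2\,f_{1,0,u,m-1}(p_2)=u(p_2-1)\ge 0$, so $q_{1,0,u,m-1}\le p_2$; moreover \eqref{e35} reads $\tfrac{u+1}{(m+1)^2}+\tfrac{m}{m+1}<1$, i.e.\ $u\le m-1$, so each such tuple is admissible and its root lies in $(p_1,\f)$ by Lemma~\ref{lem:33}. Hence every $q_{1,0,u,m-1}$ with $0\le u\le m-1$ belongs to $\B_2(M)\cap(p_1,p_2]$.

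The reverse inclusion $\subseteq$ is the main obstacle, and it is where the case analysis on $(k,j)$ lives. Given any admissible tuple with $q_{k,j,u,v}\le p_2$, I would exploit the symmetry $f_{k,j,u,v}=f_{j,k,v,u}$ (hence $q_{k,j,u,v}=q_{j,k,v,u}$) to assume $k\ge j$, and then estimate the two positive terms in the displayed formula: $p_2^{-k}(m+u(p_2-1))$ decreases in $k$ and is at most $mp_2$, $m$, or $m/p_2$ according as $k=0$, $k\ge1$, or $k\ge2$, while \eqref{e35} caps $u,v$ (in particular $v\le m-1$ whenever $j=0$). Running through the possibilities $j\ge2$, $j=1$, and $j=0$ with $k=0$, one shows $f_{k,j,u,v}(p_2)<0$, forcing $q_{k,j,u,v}>p_2$, in every case except $(k,j)=(1,0)$ with $v=m-1$, which recovers the listed values. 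The delicate point is the borderline tuples (such as $(2,0,m,m-1)$, and for $m=1$ also certain tuples with $j\ge1$): for these the estimates meet with equality, so $f_{k,j,u,v}(p_2)=0$ and the root is exactly $p_2=q_{1,0,0,m-1}$, already on the list. Tracking these equality cases via the quadratic identity is the part that needs care. This yields $\B_2(M)\cap(p_1,p_2]=\set{q_{1,0,u,m-1}:u=0,\dots,m-1}$.

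Finally, to identify the global smallest base I would first record that $\B_2(M)\cap(1,p_1]=\emptyset$: by Lemma~\ref{prop:22} every interior point of $I_q$ has a continuum or countably infinitely many $q$-expansions when $q\le p_1$, while the endpoints are unique, so no $x$ has exactly two expansions. Since $\B_2(M)\cap(p_1,p_2]\ne\emptyset$ and every element of $\B_2(M)\cap(p_2,\f)$ exceeds $p_2$, the smallest element of $\B_2(M)$ is the smallest element of the finite set above, which by the second part of Lemma~\ref{lem:34} is attained at the largest admissible value $u=m-1$, namely $q_{1,0,m-1,m-1}$. Clearing denominators in \eqref{e33} for this tuple gives $q^3-(m+2)q^2+mq+1=(q-1)\bigl(q^2-(m+1)q-1\bigr)=0$, whose only root exceeding $p_1=m+1$ is $\dfrac{m+1+\sqrt{m^2+2m+5}}{2}$, completing the proof.
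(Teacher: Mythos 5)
Your proposal is correct and follows essentially the same route as the paper: reduce via Lemma~\ref{lem:31} and Lemma~\ref{lem:33} to the sign conditions $f_{k,j,u,v}(p_1)<0$ and $f_{k,j,u,v}(p_2)\ge 0$, use the symmetry $f_{k,j,u,v}=f_{j,k,v,u}$ and the monotonicity of Lemmas~\ref{lem:32} and~\ref{lem:34} to run the case analysis on $(k,j)$, identify the borderline tuples whose root equals $p_2=q_{1,0,0,m-1}$, and extract the smallest element at $u=m-1$. Your explicit closed form for $f_{k,j,u,v}(p_2)$ via $p_2^2=(m+1)p_2+m$ and your remark that $\B_2(M)\cap(1,p_1]=\emptyset$ by Lemma~\ref{prop:22} are welcome clarifications, but they do not change the argument's structure.
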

\begin{proof}
By Lemma \ref{lem:31} it suffices to investigate the  parameters $(k,j,u,v)$   such that
 $$p_1<q_{k,j,u,v}\le p_2.$$
Note by (\ref{eq:31}) that $p_2^2=(m+1)p_2+m$. Then by Lemmas \ref{lem:32}, \ref{lem:33} and by  (\ref{e33}) it follows that
 $q_{k,j,u,v}\in\B_2(M)\cap(p_1,p_2]$ if and only if the parameters $(k,j,u,v)$ satisfy
 \[f_{k,j,u,v}(p_1)<0,\quad f_{k,j,u,v}(p_2)\ge 0,\]
 i.e.,
 the parameters  $(k,j,u,v)$ satisfy
 (\ref{e35}) and
 \begin{equation}
   \label{e36}
   m(1-p_2)+\frac{m-u+u p_2}{p_2^k}+\frac{m-v+v p_2}{p_2^j}\ge 0.
 \end{equation}
Note by (\ref{e33})   that $f_{k,j,u,v}(q)=f_{j,k,v,u}(q)$. Then we may assume $k\ge j$.

If $m=1$, then   by (\ref{e33}) and Lemma \ref{lem:34} one can verify   that $q_{k,j,u,v}\in(p_1,p_2]$ if and only if
\[(k,j,u,v)\in\set{(2,1,1,1),(2,0,1,0),(1,1,1,0),(1,1,0,1),(1,0,0,0)}.\]
 Note that $q_{2,1,1,1}=q_{2,0,1,0}=q_{1,1,1,0}=q_{1,1,0,1}=q_{1,0,0,0}=1+\sqrt{2}$. Therefore,
  \[
\B_2\cap(p_1, p_2]=\set{q_{1,0,0,0}}=\set{1+\sqrt{2}}.
  \]

In the following we will assume  $m\ge 2$. First we show that $j=0$. Note by
 (\ref{e33})   that
 $$
 q_{1,1,m,m}  =2m>p_2.
 $$
Then by Lemma \ref{lem:34} we have $j=0$ as required. And so we have
$v \le m-1$ by (\ref{e35}). Moreover, one can check that $q_{2,0,m,m-1} = p_2$. By Lemma \ref{lem:34} this implies that $q_{2,0,u,m-1} > p_2$ for $u< m$, and that $q_{k,0,u,m-1}> p_2$
for $k\ge  3$. Note that $q_{1,0,0,m-1}=p_2$.  Hence, it suffices to consider
$k \le 1$.

If $k=j=0$, then by (\ref{e33}) we have
$$q_{0,0,u,v}=2m-u-v,$$  which can not fall into the interval $(p_1,p_2]$,  since by (\ref{eq:31}) we have
  $$
 m+1=p_1< p_2=\frac{m+1+\sqrt{(m+1)^2+4m}}{2}<m+2.
  $$

If  $k=1,j=0$, then by (\ref{e33}) we have
  $$
  q_{1,0,u,v}=\frac{2m-v+\sqrt{(2m-v)^2+4(m-u)}}{2}.
  $$
  One can check that $q_{1,0,u,v}\in(p_1,p_2]$ if and only if $v=m-1$ and $0\le u\le m-1$.
\end{proof}

\section{smallest base of $\B_2(M)$ with $M=2m-1$}\label{sec:smallest base with M odd}
In this section we are going to investigate  the smallest base $q_2(M)$ of $\B_2(M)$ with $M=2m-1$.
Recall from (\ref{eq:21}) and (\ref{eq:22}) that
\begin{equation}\label{e41}
p_1=\frac{m+\sqrt{m^2+4m}}{2},
\end{equation}
and $p_2\in(p_1,\f)$ satisfies
\begin{equation}\label{e42}
p_2^3=(m+1)p_2^2-p_2+m.
\end{equation}

\begin{lemma}\label{l41}
Let $M=2m-1$ and $q\in(p_1,p_2]$. Then $q\in\B_2(M)$ if and only if $q$ satisfies one of the following equations:
\begin{eqnarray}
(10^{k_1} u_1 (m(m-1))^\f)_q&=(0\overline{0^{j_1} v_1 (m(m-1))^\f})_q\label{eq:41}\\
(10^{k_2} u_2 ((m-1)m)^\f)_q&=(0\overline{0^{j_2} v_2 ((m-1)m)^\f})_q\label{eq:42}\\
(10^{k_3} u_3 (m(m-1))^\f)_q&=(0\overline{0^{j_3} v_3 ((m-1)m)^\f})_q\label{eq:43}\\
(10^{k_4} u_4 ((m-1)m)^\f)_q&=(0\overline{0^{j_4} v_4 (m(m-1))^\f})_q\label{eq:44}
\end{eqnarray}
for some parameters $k_i,j_i= 0,1,\cdots$, and  $u_i, v_i\in\set{0,1,\cdots,m-1}$, where $i=1,2,3,4,$.
\end{lemma}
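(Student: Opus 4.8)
The plan is to mirror the structure of the proof of Lemma~\ref{lem:31}, but now tracking the richer tail behavior forced by Proposition~\ref{prop:24}. The sufficiency direction is the easier half: assuming $q$ satisfies one of \eqref{eq:41}--\eqref{eq:44}, I set $x$ equal to the common value of the two sides. By Lemma~\ref{lem:26} any $q$-expansion $(x_i)$ of $x$ has $x_1\in\{0,1\}$, and in each of the two cases Proposition~\ref{prop:24} guarantees that the corresponding shifted tail---either $0^{k_i}u_i(\cdots)^\f$ or $\overline{0^{j_i}v_i(\cdots)^\f}$---is a genuine element of $\U_q'$, hence has a unique expansion. This pins $x$ down to exactly two $q$-expansions, so $q\in\B_2(M)$.

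For the necessity direction I take $q\in(p_1,p_2]\cap\B_2(M)$ and an $x$ with exactly two expansions $(a_i)$ and $(b_i)$, letting $n$ be the first index of disagreement with $a_n>b_n$. The same elementary estimate as in Lemma~\ref{lem:31}---comparing $1/q$ against $\sum_{k\ge 3}(M)/q^k$ and using $q>p_1$, now with $p_1$ given by \eqref{e41}---forces $a_n=b_n+1$ and then $a_{n+1}<b_{n+1}$, so I reach the normalized identity $(1\,a_{n+1}a_{n+2}\cdots)_q=(0\,b_{n+1}b_{n+2}\cdots)_q$ with both tails lying in $\U_q'$. The new feature is that Proposition~\ref{prop:24} allows each unique tail to take \emph{two} shapes, $0^k u(m(m-1))^\f$ and $0^k u((m-1)m)^\f$ (and their reflections). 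Writing the first tail as one of these two non-reflected forms and the second as one of the two reflected forms, the four combinations produce exactly the four displayed equations \eqref{eq:41}--\eqref{eq:44}; the indices $k_i,j_i$ and digits $u_i,v_i\in\{0,\dots,m-1\}$ are read off directly.

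The core obstacle, exactly as in the even case, is ruling out the ``same-parity'' pairings: I must show that the two tails cannot both be of non-reflected type nor both of reflected type. For the non-reflected case, if $(a_{n+i})$ and $(b_{n+i})$ were both of the form $0^k u(m(m-1))^\f$ or $0^k u((m-1)m)^\f$, then \eqref{eq:32}-style reasoning would make the right-hand number $(0\,b_{n+1}\cdots)_q$ carry two expansions, contradicting that it lies in $\U_q'$ by Proposition~\ref{prop:24}; the reflected case is dispatched symmetrically by adding $M$ to the leading digit and invoking the reflection symmetry of $\U_q'$. I expect this exclusion step to require slightly more care than in the even case, because both admissible non-reflected tails share the eventual period $m(m-1)$ versus $(m-1)m$, and I must check the lexicographic inequalities of Lemma~\ref{prop:21} against $\al(q)\le\al(p_2)=(mm(m-1)(m-1))^\f$ to confirm no hidden coincidence lets a ``same-type'' pair survive.

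Finally I must verify $k_i,j_i\neq\infty$, again by the argument of Lemma~\ref{lem:31}: if some index were infinite, then $1$ would admit an expansion of the form $(M)^{j}(\cdots)(\cdots)^\f$, forcing $\al(q)$ to exceed $\al(p_2)=(mm(m-1)(m-1))^\f$ and contradicting $q\le p_2$ via the monotonicity in Lemma~\ref{prop:21}. With the four cases exhausted and the degenerate indices eliminated, the equivalence is established. The main subtlety to watch is bookkeeping: with four equations and the interplay between the two periodic shapes, I will want to state the reflection symmetry $\overline{(\,\cdot\,)}$ once cleanly and reuse it, rather than repeat each sub-argument four times.
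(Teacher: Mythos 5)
Your proposal is correct, and its skeleton (sufficiency via Proposition \ref{prop:24} and Lemma \ref{lem:26}; necessity via the first index of disagreement, $a_n=b_n+1$ from Lemma \ref{lem:26}, classification of the two tails in $\U_q'$, and elimination of infinite $k$) matches the paper's. Where you genuinely diverge is the key exclusion step. You transplant the argument of Lemma \ref{lem:31}: if the $b$-tail were of non-reflected type, then $0b_{n+1}b_{n+2}\cdots$ would again be one of the sequences listed in Proposition \ref{prop:24}, hence in $\U_q'$, yet the number it represents also carries the distinct expansion $1a_{n+1}a_{n+2}\cdots$, contradicting uniqueness; symmetrically for the $a$-tail and the reflected type. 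The paper instead argues metrically: if the $b$-tail were non-reflected then $1=(b_{n+1}b_{n+2}\cdots)_q-(a_{n+1}a_{n+2}\cdots)_q\le((m(m-1))^\f)_q<((m(m-1))^\f)_{p_1}=1$, and an analogous estimate applies when the $a$-tail is reflected. Both routes are sound; yours keeps the even and odd cases uniform and needs no computation, while the paper's needs only crude bounds on tail values plus the identity $((m(m-1))^\f)_{p_1}=1$ and never has to check that prepending a $0$ (resp.\ an $M$) stays inside the classified list. (Your intermediate step $a_{n+1}<b_{n+1}$ is harmless but not actually used by either exclusion argument.) Two points to tighten: (i) you phrase the exclusion as ruling out ``both non-reflected'' and ``both reflected,'' which literally leaves the pairing ``$a$ reflected, $b$ non-reflected'' untreated --- in fact each of your two arguments constrains a single tail, so state them that way; (ii) in the $k=\infty$ step the forced identity $\al(q)=b_{n+1}b_{n+2}\cdots$ is contradicted in some subcases by the lower bound $\al(q)>\al(p_1)=(m(m-1))^\f$ rather than by the comparison with $\al(p_2)$, so you should invoke both sides of $\al(p_1)<\al(q)\le\al(p_2)$ as the paper does.
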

\begin{proof}
 First we show that $\B_2(M)\cap(p_1,p_2]\ne\emptyset$.   By Proposition \ref{prop:24} and   Lemma \ref{lem:26} it follows that the number
  $$
  (10^4((m-1)m)^\f)_{p_2}=(0\overline{(m-1)((m-1)m)^\f})_{p_2}
  $$
  has exactly two $p_2$-expansions, i.e., $p_2\in \B_2(M)$.

Clearly, the sufficiency follows by Proposition \ref{prop:24} and Lemma \ref{lem:26}.

 Now we prove the necessity. Take $q \in(p_1,p_2]\cap\B_2(M)$.
Then there exists $x\in I_q$ which has exactly two different $q$-expansions $(a_i)$
and $(b_i)$, i.e.,
\begin{equation}\label{li5}
((a_i))_{q}=x=((b_i))_{q}.
\end{equation}
Let $n\ge 1$ be the least integer such that $a_n\ne b_n$. Without loss of generality we assume  $a_n> b_n$. Then
by (\ref{li5}) it follows
$$
(a_na_{n+1}\cdots )_q=(b_nb_{n+1}\cdots )_q \quad\textrm{and}\quad(a_{n+i}), ~(b_{n+i})\in\U_q'
$$
By  Lemma \ref{lem:26} we have $a_n=b_n+1$, and therefore
 \begin{equation}\label{li6}
1=(b_{n+1}b_{n+2}\cdots )_q-(a_{n+1}a_{n+2}\cdots )_q
\end{equation}
where $(a_{n+i}), ~(b_{n+i})\in\U_q'$.

Now we claim that $(a_{n+i})$ is of the form $0^{k} u (m(m-1))^\f$ or $0^{k} u ((m-1)m)^\f$,  and $(b_{n+i})$ is of the form $\overline{0^{k} u (m(m-1))^\f}$ or $\overline{0^{k} u ((m-1)m)^\f}$,
where $k=0,1,2,\cdots $ and $u\in \{0,1,\cdots , m-1\}$. This can be verified by the following observations.

 \begin{itemize}
   \item If $(b_{n+i})$ is  of the form $0^{k} u (m(m-1))^\f$  or $0^{k} u ((m-1)m)^\f$ with $k=0,1,\cdots, \infty$ and   $u\in \{0,1,\cdots , m-1\}$, then  by (\ref{li6}) and (\ref{eq:21}) it follows that
\begin{equation*}
\begin{split}
1&=(b_{n+1}b_{n+2}\cdots )_q-(a_{n+1}a_{n+2}\cdots )_q\leq ((m(m-1))^\infty)_{q} \\
&<((m(m-1))^\infty)_{p_1}=1,
\end{split}
\end{equation*}
leading to a contradiction.
   \item If both $(a_{n+i})$ and $(b_{n+i})$ are   of the form $ \overline{0^{k} u (m(m-1))^\f}$ or $\overline{0^{k} u ((m-1)m)^\f}$ with $k=0,1,\cdots, \infty$ and $u\in \{0,1,\cdots , m-1\}$, then by (\ref{li6}) and (\ref{eq:21}) it follows
\begin{equation*}
\begin{split}
1&=(b_{n+1}b_{n+2}\cdots )_q-(a_{n+1}a_{n+2}\cdots )_q \\
&\leq ((2m-1)^\infty )_q-((m(m-1))^\infty)_{q} <((m(m-1))^\infty)_{p_1}=1,
\end{split}
\end{equation*}
also leading to a contradiction.
 \end{itemize}
By the above arguments it follows that $(a_{n+i})$ is of the form  $0^{k} u (m(m-1))^\f$ or $0^{k} u ((m-1)m)^\f$, and $(b_{n+i})$ is of the form $\overline{0^{k} u (m(m-1))^\f}$ or $\overline{0^{k} u ((m-1)m)^\f}$, where $k=0,1,\cdots,\f$ and $u\in \{0,1,\cdots , m-1\}.$ Hence, to finish the proof it suffices to prove $k\ne \f$.

Case I. $(a_{n+i})=0^\infty $.  Note that $(b_{n+i})$ is of the form $\overline{0^{k} u (m(m-1))^\f}$ or $\overline{0^{k} u ((m-1)m)^\f}$ with $k=0,1,\cdots,\f$ and $u\in\set{0,1,\cdots,m-1}$. Then by (\ref{li6}) it follows that
\begin{equation*}
\begin{split}
\al(q)=b_{n+1}b_{n+2}\cdots.
\end{split}
\end{equation*}
This leads to a contradiction with using $p_1<q\le p_2$ in  Lemma \ref{prop:21} that
\[(m(m-1))^\f=\al(p_1)<\al(q)\le\al(p_2)=(mm(m-1)(m-1))^\f.\]

Case II.  $(b_{n+i})=\overline{0^\infty}=(2m-1)^\infty $. Note that $(a_{n+i})$ is of the form $0^{k} u (m(m-1))^\f$ or $0^{k} u ((m-1)m)^\f$ with $k=0,1,\cdots,\f$  and $u\in\set{0,1,\cdots,m-1}$. Then by (\ref{li6}) it follows that
\begin{equation*}
\begin{split}
\al(q)=\overline{a_{n+1}a_{n+2}\cdots}.
\end{split}
\end{equation*}
By a similar argument as in Case I one can   show that $q\notin(p_1, p_2]$, leading to  a contradiction.
\end{proof}

\begin{remark}\label{re:1}
We point out that (\ref{eq:43}) and $(\ref{eq:44})$ are equivalent.
In fact, if $q$ is a root of (\ref{eq:43}) for some $k_3,j_3\ge 0$
and $u_3,v_3\in\set{0,1,\cdots,m-1}$. Then by reflection we have
$$
(10^{j_3}v_3 ( (m-1)m)^\f)_q=(0\overline{0^{k_3}u_3(m(m-1))^\f})_q.
$$
This corresponds to (\ref{eq:44}) with $(k_4,j_4,u_4,v_4)=(j_3,k_3,v_3,u_3)$.
\end{remark}
By Lemma \ref{l41} and Remark \ref{re:1} it suffices to investigate the appropriate roots of Equations (\ref{eq:41})--(\ref{eq:43}), which will be dealt with separately  in the following three subsections.

\subsection{Solutions of  Equation (\ref{eq:41}).}\label{subs:1}

Given $k, j\ge 0$ and $0\le u,v\le  m-1$, by Lemma \ref{l41} it is necessary to consider the zeros in $(p_1, p_2]$ of the function
  \begin{equation}\label{e45}
  \begin{split}
g^{(1)}_{k,j,u,v}(q)&=(q^3-q)\big((10^k u (m(m-1))^\f)_q\\
&\;\;\;-(0\overline{0^j v (m(m-1))^\f})_q\big)\\
&=(q+1)(q-2m)+q^{-k-1}(m-1-u+mq+u q^2)\\
  &\hspace{3cm}+q^{-j-1}(m-1-v+mq+vq^2).
  \end{split}
  \end{equation}

\begin{lemma}
  \label{l43}
For any $k, j\ge 0$ and $u,v\in\set{0,1,\cdots,m-1}$ the  function
  $
  g^{(1)}_{k,j,u,v}(q)$
  is strictly increasing in $(p_1,\f)$.
\end{lemma}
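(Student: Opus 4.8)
The plan is to follow the strategy of Lemma \ref{lem:32}: differentiate $g^{(1)}_{k,j,u,v}$, isolate the contribution of the quadratic head $(q+1)(q-2m)$, and control the two decaying tails. Differentiating (\ref{e45}) gives
\[
g^{(1)\prime}_{k,j,u,v}(q)=(2q-2m+1)+T_k(q)+T_j(q),
\]
where $T_k(q)$ denotes the derivative of $q^{-k-1}(m-1-u+mq+uq^2)$ and $T_j$ the analogous term in $v$. Since $p_1=\tfrac{m+\sqrt{m^2+4m}}{2}>m$, every $q\in(p_1,\f)$ satisfies $q>m$, so the head contributes $2q-2m+1>1$. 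Because $g^{(1)}_{k,j,u,v}(q)=g^{(1)}_{j,k,v,u}(q)$, it then suffices, exactly as in the even case, to prove the single-tail bound $T_k(q)\ge-\tfrac12$ for every $k\ge0$, every $u\in\set{0,\dots,m-1}$ and every $q>p_1$; adding two such bounds to the head yields $g^{(1)\prime}_{k,j,u,v}(q)>1-\tfrac12-\tfrac12=0$.

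For the tail bound I would write $T_k(q)=q^{-k}\big[(1-k)u-\tfrac{km}{q}-\tfrac{(k+1)(m-1-u)}{q^2}\big]$. The cases $k=0$ and $k=1$ are handled directly: using $q^2>p_1^2=mp_1+m>m^2+m$ one checks $T_0(q)\ge-\tfrac{m-1}{q^2}>-\tfrac{1}{m+1}$, and for $k=1$ that $-\tfrac{m}{q^2}-\tfrac{2(m-1-u)}{q^3}$ stays above $-\tfrac12$. For $k\ge2$ the key observation is that the coefficient of $u$ inside the bracket, namely $(1-k)+\tfrac{k+1}{q^2}$, is negative once $m\ge2$ (while $m=1$ forces $u=0$). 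Hence in either case substituting the extremal value $u=m-1$ minimizes the bracket, at which point the $q^{-2}$--term vanishes and a clean cancellation leaves
\[
T_k(q)\ge\frac{(1-k)(m-1)}{q^k}-\frac{km}{q^{k+1}}.
\]

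It then remains to prove that this lower bound exceeds $-\tfrac12$ for all $k\ge2$ and $q>p_1$, and I expect this final estimate to be the main obstacle. Unlike the even case, where the corresponding quantity collapsed to the single term $\tfrac{m(1-k)}{q^k}$, here two terms survive, and the naive uniform bounds (replacing $m-1$ and $m$ by $q$) are too lossy: they already fail for $m=1$, where $u=0$ is forced and the genuine quantity is merely $-k/q^{k+1}$. The honest route is to retain the explicit $m$--dependence. Since the two surviving numerators are non-positive, the displayed lower bound is strictly increasing in $q$, so it is minimized as $q\to p_1^+$; clearing denominators at $q=p_1$ reduces the claim to $p_1^{k+1}\ge2\big[(k-1)(m-1)p_1+km\big]$. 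I would verify this by induction on $k$, using the defining relation $p_1^2=mp_1+m$ to reduce the powers of $p_1$. The base case $k=2$ amounts to $(m^2-m+2)p_1+m(m-4)\ge0$, which holds for every $m\ge1$, and the inductive step closes because $p_1^{k+1}$ grows geometrically while the right-hand side grows only linearly in $k$.
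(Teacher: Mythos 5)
Your proposal is correct and follows essentially the same route as the paper: the same symmetry reduction to a single tail against half (here: a share of) the head, the same derivative, the same substitution of the extremal value $u=m-1$ to cancel the $q^{-2}$ term, and the same final evaluation at $q=p_1$ via $p_1^2=mp_1+m$. The only difference is cosmetic: to make the surviving bound $\frac{(1-k)(m-1)}{q^k}-\frac{km}{q^{k+1}}\ge-\frac12$ uniform in $k$, you induct on $k$ at $q=p_1$, whereas the paper observes that $\phi(k)=q^{-k}\left(\frac{km}{q}+(m-1)(k-1)\right)$ is decreasing in $k$ and so reduces everything to the single case $k=2$.
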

\begin{proof}
In terms of (\ref{e45}) and by symmetry it suffices to prove
$$
h_{k,u}(q)=\frac{(q+1)(q-2m)}{2}+q^{-k-1}(m-1-u+mq+u q^2)
$$
has a positive derivative in $(m+1/2,\f)$ for any $k\ge 0$ and $u\in\set{0,1,\cdots,m-1}.$
Differentiating $h_{k,u}$ it yields
\begin{equation}\label{e46}
\begin{split}
h'_{k,u}(q)&=q-m+\frac{ 1}{2}+u q^{-k}\left( \frac{k+1}{q^2}+1-k  \right)\\
& \quad+q^{-k}\left(-(k+1)\frac{m-1}{q^2}-\frac{km}{q}\right).
\end{split}
\end{equation}
Then by using $q>p_1$ and $p_1^2=m p_1+m$ in (\ref{e46}) one can show that
$    h_{k,u}'(q)>0 $
for $k=0,1$ and $2$.

If $k\ge 3$, then by using $q>p_1$ we have
$
({k+1})/{q^2}+1-k\le 0.
$ Moreover, one can show that the function
\[
\phi(k)=q^{-k}\left( \frac{km}{q}+(m-1)(k-1) \right)
\] satisfies  $\phi(k+1)<\phi(k)$ for any $k\geq 2$.
Therefore, by using  $q>p_1$ and $p_1^2=m p_1+m$ in (\ref{e46}) it follows that
\begin{equation*}
  \begin{split}
   h'_{k,u}(q)&\ge  q-m+\frac{1}{2}-q^{-k}\left( \frac{km}{q}+(m-1)(k-1) \right)\\
     &\ge q-m+\frac{1}{2}-q^{-2}\left( \frac{2m}{q}+(m-1)  \right) \\
    &\ge p_1-m+\frac{1}{2}-\frac{m-1}{p_1^2}-\frac{2m}{p_1^3}\\
    &=p_1^{-3}\left(\frac{1}{2}p_1^3+p_1-m\right)\;>0.
  \end{split}
\end{equation*}
\end{proof}
%
%
%

\begin{lemma}
  \label{l44}
The equation $g^{(1)}_{k,j,u,v}(q)=0$   has a unique root in $(p_1,\f)$ if and only if the parameters $(k,j,u,v)$ satisfy
\begin{equation}
  \label{e47}
\frac{u+1}{m p_1^k}+\frac{v+1}{m p_1^j}<1.
\end{equation}
\end{lemma}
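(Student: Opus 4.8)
The plan is to follow the same strategy as in the even case (Lemma~\ref{lem:33}): reduce the existence of a unique root to a sign condition at the left endpoint $p_1$. By Lemma~\ref{l43} the function $g^{(1)}_{k,j,u,v}$ is strictly increasing on $(p_1,\f)$, and from (\ref{e45}) it is manifestly continuous on $[p_1,\f)$ with leading term $(q+1)(q-2m)\sim q^2$, so $g^{(1)}_{k,j,u,v}(q)\to+\f$ as $q\to\f$. Consequently the equation $g^{(1)}_{k,j,u,v}(q)=0$ has a (necessarily unique) root in $(p_1,\f)$ if and only if $g^{(1)}_{k,j,u,v}(p_1)<0$: if $g^{(1)}_{k,j,u,v}(p_1)<0$, the intermediate value theorem together with strict monotonicity yields exactly one zero, while if $g^{(1)}_{k,j,u,v}(p_1)\ge 0$, strict monotonicity forces $g^{(1)}_{k,j,u,v}>0$ on all of $(p_1,\f)$, giving no root.

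The remaining task is to evaluate $g^{(1)}_{k,j,u,v}(p_1)$ and read off its sign, for which I would use the defining relation of $p_1$ from (\ref{e41}), namely $p_1^2=m\,p_1+m$, equivalently $m(p_1+1)=p_1^2$. Substituting $p_1^2=mp_1+m$ into the bracketed factors of (\ref{e45}) collapses them: one checks that
\[
m-1-u+m\,p_1+u\,p_1^2=(u+1)\bigl(p_1^2-1\bigr),
\]
and likewise $m-1-v+m\,p_1+v\,p_1^2=(v+1)(p_1^2-1)$. Hence
\[
g^{(1)}_{k,j,u,v}(p_1)=(p_1+1)(p_1-2m)+(p_1^2-1)\left(\frac{u+1}{p_1^{k+1}}+\frac{v+1}{p_1^{j+1}}\right).
\]
Applying $m(p_1+1)=p_1^2$ again gives $(p_1+1)(p_1-2m)=-p_1(p_1-1)$ and the coefficient identity $\dfrac{p_1^2-1}{p_1^{k+1}}=\dfrac{p_1(p_1-1)}{m\,p_1^{k}}$ (and its analogue with $j$), so everything lines up to
\[
g^{(1)}_{k,j,u,v}(p_1)=p_1(p_1-1)\left(\frac{u+1}{m\,p_1^{k}}+\frac{v+1}{m\,p_1^{j}}-1\right).
\]

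Since $p_1>1$, the prefactor $p_1(p_1-1)$ is strictly positive, so $g^{(1)}_{k,j,u,v}(p_1)<0$ if and only if $\frac{u+1}{m\,p_1^{k}}+\frac{v+1}{m\,p_1^{j}}<1$, which is exactly (\ref{e47}); combined with the reduction of the first paragraph, this proves the claim. I expect the only delicate point to be the algebraic simplification in the second step: the factor $p_1^2-1$ does not obviously produce the $m$ appearing in the denominators of (\ref{e47}), and it is precisely the minimal relation $m(p_1+1)=p_1^2$ of $p_1$ that converts $\frac{p_1^2-1}{p_1^{k+1}}$ into $\frac{p_1(p_1-1)}{m\,p_1^{k}}$ and thereby matches the coefficients. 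Everything else is bookkeeping entirely parallel to Lemma~\ref{lem:33} in the even case.
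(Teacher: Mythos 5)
Your proof is correct and follows essentially the same route as the paper: reduce to the sign of $g^{(1)}_{k,j,u,v}(p_1)$ via the strict monotonicity of Lemma \ref{l43} together with continuity and the growth at infinity, then simplify $g^{(1)}_{k,j,u,v}(p_1)$ using $p_1^2=mp_1+m$. In fact your prefactor $p_1(p_1-1)$ is the correct one (check $m=1$, $k=j=u=v=0$, where $g^{(1)}_{0,0,0,0}(p_1)=p_1^2-p_1=1$); the paper's displayed factor $\frac{p_1-1}{p_1}$ is off by $p_1^2$, which does not affect the sign and hence not the conclusion.
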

\begin{proof}
By the monotonicity shown in Lemma 4.3
 and the continuity of $g^{(1)}_{k,j,u,v}$ it follows  that the equation $g^{(1)}_{k,j,u,v}(q)=0$   has a unique root in $(p_1,\f)$ if and only if    $$g^{(1)}_{k,j,u,v}(p_1)<0.$$
Then the lemma follows by using $p_1^2=m p_1+m$ in (\ref{e45}) that
$$
 g^{(1)}_{k,j,u,v}(p_1)=\frac{p_1-1}{p_1}\left( -1+\frac{u+1}{m p_1^k}+\frac{v+1}{m p_1^j}\right).
$$
\end{proof}

In terms of Lemmas \ref{l44}      we denote by $q^{(1)}_{k,j,u,v}$ the unique root of $g^{(1)}_{k,j,u,v}(q)=0$ in $(p_1,\f)$, where $(k,j,u,v)$ satisfies (\ref{e47}).

In a similar way as in Lemma \ref{lem:34} one can verify the
following monotonicity of the sequence $(q^{(1)}_{k,j,u,v})$.
\begin{lemma}
  \label{l46}
  \begin{enumerate}
    \item  The sequence $(q^{(1)}_{k,j,u,v})$ is strictly increasing with respect to the  parameters $k$ and $j$;
    \item  The sequence $(q^{(1)}_{k,j,u,v})$ is strictly decreasing with respect to the parameters $u$ and $v$.
  \end{enumerate}
\end{lemma}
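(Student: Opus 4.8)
The plan is to follow the proof of Lemma \ref{lem:34} almost verbatim, the only new ingredient being the explicit form of $g^{(1)}_{k,j,u,v}$ in (\ref{e45}) together with its strict monotonicity on $(p_1,\f)$ established in Lemma \ref{l43}. The governing principle is uniform: changing a single parameter shifts $g^{(1)}_{k,j,u,v}$ pointwise in a definite direction near its root, and since $g^{(1)}_{k,j,u,v}$ is strictly increasing, a downward shift of the graph pushes the root to the right while an upward shift pushes it to the left. I would also record that all comparisons are well posed: by (\ref{e47}) the set of admissible quadruples $(k,j,u,v)$ is closed under increasing $k$ or $j$ and under decreasing $u$ or $v$, so whenever the relevant pair of roots is compared, both indeed exist.

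For part (1) I would first treat $k$. Fix $j\ge 0$ and $u,v\in\set{0,1,\cdots,m-1}$ and write $q_k=q^{(1)}_{k,j,u,v}$. In (\ref{e45}) the only $k$-dependent summand is $q^{-k-1}(m-1-u+mq+u q^2)$, and for $q>p_1>1$ with $0\le u\le m-1$ the bracket satisfies $m-1-u+mq+u q^2\ge mq>0$. Hence $q_{k+1}^{-(k+1)-1}<q_{k+1}^{-k-1}$ gives
$$
g^{(1)}_{k+1,j,u,v}(q_{k+1})<g^{(1)}_{k,j,u,v}(q_{k+1}),
$$
and combining this with $g^{(1)}_{k+1,j,u,v}(q_{k+1})=0=g^{(1)}_{k,j,u,v}(q_k)$ yields $g^{(1)}_{k,j,u,v}(q_k)<g^{(1)}_{k,j,u,v}(q_{k+1})$; Lemma \ref{l43} then forces $q_k<q_{k+1}$. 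Since (\ref{e45}) is symmetric under interchanging $(k,u)$ with $(j,v)$, that is $g^{(1)}_{k,j,u,v}(q)=g^{(1)}_{j,k,v,u}(q)$, the identical argument gives strict monotonicity in $j$, completing part (1).

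For part (2) I would track instead the sign of the shift produced by a digit increase. Fixing $k,j$ and $v$ and writing $q_u=q^{(1)}_{k,j,u,v}$, I would group the $u$-dependence in (\ref{e45}) as $q^{-k-1}(m-1+mq)+u\,q^{-k-1}(q^2-1)$; since $q^2-1>0$ for $q>p_1>1$, replacing $u$ by $u+1$ strictly increases the function at every such $q$, i.e.\ $g^{(1)}_{k,j,u,v}(q)<g^{(1)}_{k,j,u+1,v}(q)$. Evaluating at $q_{u+1}$ and using $g^{(1)}_{k,j,u+1,v}(q_{u+1})=0=g^{(1)}_{k,j,u,v}(q_u)$ gives $g^{(1)}_{k,j,u,v}(q_{u+1})<g^{(1)}_{k,j,u,v}(q_u)$, so Lemma \ref{l43} yields $q_{u+1}<q_u$; the symmetry again transfers this to $v$.

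I do not expect any genuine obstacle. The entire content reduces to the two elementary positivity checks $m-1-u+mq+u q^2>0$ and $q^2-1>0$ for $q>p_1>1$, which fix the direction of each shift, after which the strict monotonicity of Lemma \ref{l43} does all the work. The only care required is the bookkeeping of which shift raises and which lowers the graph—exactly as in Lemma \ref{lem:34}.
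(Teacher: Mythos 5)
Your proposal is correct and follows exactly the route the paper intends: the paper omits the details of Lemma \ref{l46}, stating only that it is verified ``in a similar way as in Lemma \ref{lem:34},'' and your argument is precisely that adaptation, with the right positivity checks ($m-1-u+mq+uq^2\ge mq>0$ for the $k$-shift and $q^2-1>0$ for the $u$-shift) and a sensible extra remark that condition (\ref{e47}) keeps all compared roots well defined.
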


In the following lemma we show that no elements of $(p_1,p_2]\cap\B_2(M)$ satisfy Equation (\ref{eq:41}).

\begin{lemma}
  \label{l47}
   Let $M=2m-1$. Then equation (\ref{eq:41}) has no solutions in $(p_1,p_2]$.
\end{lemma}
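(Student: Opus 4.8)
The plan is to rule out a zero of $g^{(1)}_{k,j,u,v}$ in $(p_1,p_2]$ for every parameter vector $(k,j,u,v)$ with $k,j\ge0$ and $0\le u,v\le m-1$. By Lemma \ref{l44}, if $(k,j,u,v)$ fails (\ref{e47}) then $g^{(1)}_{k,j,u,v}$ has no zero in $(p_1,\infty)$ at all, so such vectors may be discarded. For a vector satisfying (\ref{e47}) the unique zero $q^{(1)}_{k,j,u,v}$ exists, and since $g^{(1)}_{k,j,u,v}$ is strictly increasing on $(p_1,\infty)$ by Lemma \ref{l43}, we have $q^{(1)}_{k,j,u,v}>p_2$ if and only if $g^{(1)}_{k,j,u,v}(p_2)<0$. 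Thus it suffices to prove $g^{(1)}_{k,j,u,v}(p_2)<0$ for every vector satisfying (\ref{e47}).

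The next step is to reduce the infinitely many vectors to finitely many using the monotonicity of the roots in Lemma \ref{l46}. Since $q^{(1)}_{k,j,u,v}$ decreases in $u,v$ and increases in $k,j$, starting from any admissible vector and repeatedly increasing $u$ or $v$, or decreasing $k$ or $j$, while preserving (\ref{e47}), only decreases the root and terminates at a vector that is \emph{maximal} for the constraint (\ref{e47}) (no such move stays feasible). Hence, if all maximal vectors satisfy $g^{(1)}_{k,j,u,v}(p_2)<0$, so does every admissible one, and the lemma follows. Assuming $m\ge2$ (the case $m=1$ is $M=1$, covered by Theorem \ref{th:11}), a short feasibility analysis shows every maximal vector has $k,j\in\{0,1\}$: when $\min\{k,j\}\ge1$ the bound $2/p_1<1$ lets one decrease the larger index while staying feasible, so such a vector is not maximal, and the case where one index is $0$ is ruled out because its digit term alone nearly exhausts (\ref{e47}). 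Up to the symmetry $g^{(1)}_{k,j,u,v}=g^{(1)}_{j,k,v,u}$ the maximal vectors are exactly $(1,1,m-1,m-1)$, $(1,0,m-1,m-2)$, and, when $m\ge3$, $(0,0,u,v)$ with $u+v=m-3$.

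For each of these I would clear denominators and substitute the defining relation $p_2^3=(m+1)p_2^2-p_2+m$ from (\ref{e42}), which collapses $p_2\,g^{(1)}_{k,j,u,v}(p_2)$ to a quadratic in $p_2$. For $(1,1,m-1,m-1)$ one gets $(2-m)p_2^2-3(p_2-m)$, negative since $2-m\le0$ and $p_2>m$; for $(1,0,m-1,m-2)$ one gets $-2\bigl(p_2-m-\tfrac12\bigr)$, negative because $p_2>p_1>m+\tfrac12$ by (\ref{e41}); and for $(0,0,u,v)$ with $u+v=m-3$ one gets $-p_2^2-p_2+2m+1$, negative because $p_2^2>m^2\ge2m+1$ when $m\ge3$. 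In every case $g^{(1)}_{k,j,u,v}(p_2)<0$, hence $q^{(1)}_{k,j,u,v}>p_2$, and (\ref{eq:41}) has no solution in $(p_1,p_2]$.

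The main obstacle is the combinatorial step of pinning down the maximal vectors, rather than the arithmetic. A naive estimate bounding $g^{(1)}_{k,j,u,v}(p_2)$ directly by the constraint (\ref{e47}) is too lossy, because the extremal vectors do \emph{not} saturate (\ref{e47}) (for instance $(1,1,m-1,m-1)$ gives left-hand side $2/p_1$, not $1$); this is precisely why one must descend to the discrete list of maximal vectors instead of arguing through a single continuous inequality. Verifying that $\max\{k,j\}\ge2$ is incompatible with maximality for $m\ge2$, and that the feasible ranges of $u,v$ when some index is $0$ are exactly as claimed, is the delicate part; once the list is fixed, the three polynomial checks via (\ref{e42}) and $p_1>m+\tfrac12$ are routine.
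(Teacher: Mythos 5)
Your argument is correct for $m\ge 2$ and is essentially the paper's proof in a different packaging: the reduction via the root monotonicity of Lemma \ref{l46} to the extremal parameter vectors $(1,1,m-1,m-1)$, $(1,0,m-1,m-2)$ and $(0,0,u,v)$ with $u+v=m-3$ reproduces the paper's Cases I--III, and your three evaluations at $p_2$ using (\ref{e42}), namely $(2-m)p_2^2-3(p_2-m)$, $-2(p_2-m-\tfrac12)$ and $-p_2^2-p_2+2m+1$, all check out (the paper's Case III instead solves the quadratic explicitly, and its Case II first disposes of $(k,0,m-1,m-1)$ by showing $g^{(1)}(p_1)>0$, which is just the failure of (\ref{e47}) that you use). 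Your explicit restriction to $m\ge 2$ is in fact unavoidable rather than a convenience: for $m=1$ the vector $(k,j,u,v)=(3,1,0,0)$ satisfies (\ref{e47}) and $q^3\,g^{(1)}_{3,1,0,0}(q)=(q-1)(q^4-2q^2-q-1)$, whose relevant root is $q_2(1)\approx 1.71064\in(p_1,p_2]$, so the lemma as stated fails at $m=1$ --- and the paper's own Case I bound $(2-m)p_2^2-2(p_2-m)<0$ likewise breaks down there --- meaning your caveat matches the paper's implicit assumption, even though Theorem \ref{th:11} does not literally supply the missing case.
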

\begin{proof}
By Lemmas \ref{l43} and \ref{l44} it suffices to prove that no
parameters $(k,j,u,v)$ satisfy both (\ref{e47}) and
$g^{(1)}_{k,j,u,v}(p_2)\ge 0$. Note by (\ref{e45}) that
$g^{(1)}_{k,j,u,v}(q)=g^{(1)}_{j,k,v,u}(q)$. Then we may assume that
$k\ge j$. Therefore, the lemma follows by observing  the following three
cases.

Case I.   $k\ge j\ge 1$. Then by Lemma \ref{l46} it suffices to prove
\[q^{(1)}_{1,1,m-1,m-1}>p_2,\] or equivalently, $g^{(1)}_{1,1,m-1,m-1}(p_2)<0$. This can be verified by using (\ref{e42}) in (\ref{e45}) that
\begin{equation*}
  \begin{split}
    g^{(1)}_{1,1,m-1,m-1}(p_2)&={p_2}^{-1}\big({p_2}^3-(2m-1){p_2}^2-2{p_2} +2m  \big)\\
    &\le {p_2}^{-1}\big((2-m){p_2}^2-2({p_2}-m)\big)\;< 0.
  \end{split}
\end{equation*}

Case II. $k>j=0$. Then by Lemma \ref{l46} it suffices to prove
$ q^{(1)}_{k,0,m-1,m-1}\le p_1$ for all $k\ge 1$, {and} $ q^{(1)}_{1,0,m-2,m-1}=q^{(1)}_{1,0,m-1,m-2}>p_2.$

By (\ref{e45}) and (\ref{e41}) one can show that
$$g_{k,0,m-1,m-1}^{(1)}(p_1)=\frac{m}{p_1^k}+\frac{m-1}{p_1^{k-1}}> 0.$$
By Lemma \ref{l43} this implies $q^{(1)}_{k,0,m-1,m-1}< p_1$.

Moreover,  by using (\ref{e42}) in (\ref{e45}) it follows that
\begin{equation*}
  \begin{split}
    g^{(1)}_{1,0,m-1,m-2}(p_2)=g^{(1)}_{1,0,m-2,m-1}(p_2)=-\frac{(1 + m - p_2) (p_2^2-1)}{p_2}<0.
  \end{split}
\end{equation*}
  Therefore, $q^{(1)}_{1,0,m-2,m-1}>p_2.$

Case III. $k=j=0$. Then by (\ref{e45}) it follows that
\begin{equation*}
  \begin{split}
 q^{(1)}_{0,0,u,v}&=\frac{2m-u-v-2}{2}+\frac{\sqrt{(2m-u-v)^2-4}}{2}.
  \end{split}
\end{equation*}
By using (\ref{e41}) and (\ref{e42}) one can show that
 $q^{(1)}_{0,0,u,v}\notin(p_1,p_2]$ for any $u, v\in\set{0,1,\cdots,m-1}$.
\end{proof}

 \subsection{Solutions of  Equation (\ref{eq:42})}

Given $k, j\ge 0$ and $0\le u,v\le m-1$, by Lemma \ref{l41} it is
necessary to consider the zeros of the function
  \begin{equation}\label{e47'}
  \begin{split}
g^{(2)}_{k,j,u,v}(q)&=(q^3-q)\big((10^k u ((m-1)m)^\f)_q\\
&\;\;\;-(0\overline{0^j v ( (m-1)m)^\f})_q\big)\\
&=(q+1)(q-2m)+q^{-k-1}(m-q-u+mq+u q^2)\\
  &\hspace{3cm}+q^{-j-1}(m-q-v+mq+vq^2).
  \end{split}
  \end{equation}

One can show that Lemmas  \ref{l43}--\ref{l46} also hold for $g^{(2)}_{k,j,u,v}(q)$. Here we denote by $q^{(2)}_{k,j,u,v}$ the unique root of the equation $g^{(2)}_{k,j,u,v}(q)=0$ in $(p_1,\f)$, where the parameters $(k,j,u,v)$ satisfies
$$
 \frac{u p_1+u+p_1}{p_1^{k+2}}+\frac{v p_1+v+p_1}{p_1^{j+2}}<1.
$$

\begin{lemma}
  \label{l411}
    Let $M=2m-1$. Then
     $q^{(2)}_{k,j,u,v}\in(p_1,p_2]\cap\B_2(M)$ if and only if
     $$k\in\set{2,3},\quad j=0,\quad u\in\set{0,1,\cdots,m-1},\quad v=m-1.$$
 \end{lemma}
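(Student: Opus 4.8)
The plan is to follow exactly the template established in Section \ref{sec:smallest base with M even} (Theorem \ref{th:35}) and in Lemma \ref{l47}: since Lemmas \ref{l43}--\ref{l46} hold verbatim for $g^{(2)}_{k,j,u,v}$, the root $q^{(2)}_{k,j,u,v}$ is increasing in $k,j$ and decreasing in $u,v$, and it lies in $(p_1,p_2]$ precisely when the pair of sign conditions
\[
g^{(2)}_{k,j,u,v}(p_1)<0\qquad\textrm{and}\qquad g^{(2)}_{k,j,u,v}(p_2)\ge 0
\]
both hold. The first inequality is the admissibility condition already recorded (the displayed constraint on $(k,j,u,v)$), and the second is the condition $q^{(2)}_{k,j,u,v}\le p_2$. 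By the symmetry $g^{(2)}_{k,j,u,v}=g^{(2)}_{j,k,v,u}$ visible in (\ref{e47'}), I may assume $k\ge j$, so the whole problem reduces to locating the boundary parameters where $g^{(2)}_{k,j,u,v}(p_2)$ changes sign.

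First I would pin down $j$. By the monotonicity of Lemma \ref{l46}(1), once $j\ge 1$ the smallest available root is $q^{(2)}_{1,1,m-1,m-1}$ (taking $u=v=m-1$ to push the root down), and I would evaluate $g^{(2)}_{1,1,m-1,m-1}(p_2)$ using the defining relation (\ref{e42}) $p_2^3=(m+1)p_2^2-p_2+m$ to reduce the cubic; I expect this to come out strictly negative, forcing $q^{(2)}_{1,1,m-1,m-1}>p_2$ and hence ruling out all $k\ge j\ge 1$. This establishes $j=0$, after which (\ref{e47'}) and the admissibility inequality force $v\le m-1$. Next I would fix $j=0$ and determine the allowable $k$ by again choosing the root-minimizing digits: I would check that $q^{(2)}_{2,0,m-1,m-1}\le p_2$ (giving $k=2$) and that the threshold $q^{(2)}_{4,0,m-1,m-1}>p_2$ while $q^{(2)}_{3,0,m-1,m-1}\le p_2$, so that $k\in\{2,3\}$ survives but $k\ge 4$ does not; the case $k=0,1$ I would exclude by the explicit quadratic/linear formulas for $q^{(2)}_{0,0,u,v}$ and $q^{(2)}_{1,0,u,v}$ together with the bounds on $p_1,p_2$ from (\ref{e41})--(\ref{e42}), exactly as in Cases II and III of Lemma \ref{l47}. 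Finally, with $k\in\{2,3\}$ and $j=0$ fixed, the monotonicity in $v$ (Lemma \ref{l46}(2)) combined with $g^{(2)}_{k,0,u,v}(p_2)\ge 0$ should force $v=m-1$ (smaller $v$ pushes the root above $p_2$), while $u$ remains free over $\{0,1,\cdots,m-1\}$ because decreasing $u$ only lowers the root further into $(p_1,p_2]$.

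The main obstacle I anticipate is purely computational: every sign determination requires evaluating $g^{(2)}_{k,j,u,v}$ at $p_2$ and repeatedly substituting the minimal polynomial (\ref{e42}) to collapse the high powers of $p_2$, and because the minimal polynomial is cubic (rather than quadratic as for $M=2m$), the reductions are messier and the resulting expressions in $m$ must be shown to have a definite sign for \emph{all} $m\ge 1$. The delicate comparisons are the two boundary cases $q^{(2)}_{3,0,m-1,m-1}\le p_2<q^{(2)}_{4,0,m-1,m-1}$ that sharply cut off $k$ at $3$; here I would want to exhibit $g^{(2)}_{3,0,m-1,m-1}(p_2)$ and $g^{(2)}_{4,0,m-1,m-1}(p_2)$ as explicit rational functions of $p_2$ and $m$, reduce modulo (\ref{e42}), and argue the sign, possibly needing the auxiliary estimate $p_1<p_2<p_1+1$ (or a sharper bracketing of $p_2$) to control the error terms. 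I would also double-check the small-$m$ edge case $m=1$ separately, as in the proof of Theorem \ref{th:35}, since several digit ranges degenerate there.
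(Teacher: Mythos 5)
Your plan is essentially the paper's proof: use the symmetry $g^{(2)}_{k,j,u,v}=g^{(2)}_{j,k,v,u}$ to assume $k\ge j$, reduce membership in $(p_1,p_2]$ to the sign pair $g^{(2)}_{k,j,u,v}(p_1)<0$ and $g^{(2)}_{k,j,u,v}(p_2)\ge 0$, kill $j\ge 1$ by showing $g^{(2)}_{1,1,m-1,m-1}(p_2)<0$, kill $k\ge 4$ by showing $g^{(2)}_{4,0,m-1,m-1}(p_2)<0$ (the paper gets the clean value $(1-p_2)/p_2^5$ after reducing by (\ref{e42})), dispose of $k\in\{0,1\}$ with the explicit quadratic formulas, and verify $k\in\{2,3\}$ directly, exactly as in Cases I--II of the paper. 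One small correction: by Lemma \ref{l46}(2) the root $q^{(2)}_{k,j,u,v}$ is strictly \emph{decreasing} in $u$, so decreasing $u$ \emph{raises} the root rather than lowering it; to conclude that every $u\in\{0,1,\cdots,m-1\}$ is admissible you must check the extreme case $u=0$, i.e.\ $g^{(2)}_{k,0,0,m-1}(p_2)\ge 0$ for $k=2,3$ (which does hold, and is what Theorem \ref{th:417} implicitly uses via the identity $q^{(2)}_{k,0,0,m-1}=q^{(3)}_{k+1,0,m-1,m-1}$), not just the case $u=m-1$.
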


\begin{proof}
The proof is similar to Lemma \ref{l47}. Note that $g^{(2)}_{k,j,u,v}(q)=g^{(2)}_{j,k,v,u}(q)$. Then $q^{(2)}_{k,j,u,v}=q^{(2)}_{j,k,v,u}$. By symmetry   we may assume $k\ge j$.

  First we show that $j=0$. By the monotonicity   it suffices to prove $q_{1,1,m-1,m-1}^{(2)} >p_2$. This can be verified by  using (\ref{e42})   in (\ref{e47'})   that
  \begin{equation*}
    \begin{split}
   g_{1,1,m-1,m-1}^{(2)}(p_2)={p_2}^{-2}\big((2-m){p_2}^3-3{p_2}({p_2}-m)-2({p_2}-1)\big)<0.
    \end{split}
  \end{equation*}
This implies $q_{1,1,m-1,m-1}^{(2)} >p_2$. Hence, $j=0$ as required.

Now we claim that $k\le 3$. Then   it suffices to prove $q^{(2)}_{4,0,m-1,m-1}>p_2$.
By  using (\ref{e42}) in (\ref{e47'}) it follows that
\begin{equation*}
  \begin{split}
    g^{(2)}_{4,0,m-1,m-1}(p_2)&=\frac{1-p_2}{ p_2 ^5}\;<0.
  \end{split}
\end{equation*}
Then $q^{(2)}_{4,0,m-1,m-1}>p_2$, and indeed $k\le 3$.

We  will finish the proof by considering  the following two cases.

Case I. $k=0, 1$. Then by (\ref{e47'}) we have
$$
q^{(2)}_{0,0,u,v} =\frac{2m-u-v-2}{2}+\frac{\sqrt{(2m-u- v-2)^2+4(2m-u-v)}}{2},
$$
and
$$
 q^{(2)}_{1,0,u,v} =\frac{2m-v-1}{2}+\frac{\sqrt{(2m-v-1)^2+4(m-u)}}{2}.
$$
By (\ref{e41}) and (\ref{e42})  one can verify that
\[
q^{(2)}_{0,0,u,v}\notin(p_1,p_2],\quad  q^{(2)}_{1,0,u,v}\notin(p_1,p_2]
\]
for any $u, v\in\set{0,1,\cdots, m-1}$.

Case II. $k=2, 3$. By (\ref{e41}), (\ref{e42}) and (\ref{e47'}) it follows that
\[
q_{2,0,u,v}^{(2)}\in(p_1,p_2]
\]
if and only if $v=m-1$. Moreover, one can show by (\ref{e41}), (\ref{e42}) and (\ref{e47'}) that
$q_{3,0,u,v}^{(2)}\in(p_1,p_2]$ if and only if $v=m-1$.
\end{proof}

\subsection{Solutions of  Equation (\ref{eq:43})}
Given $k, j\ge 1$ and $0\le u,v< m$, by Lemma \ref{l41} it is
necessary to consider the zeros of the function
  \begin{equation}\label{e411}
  \begin{split}
g^{(3)}_{k,j,u,v}(q)&=(q^3-q)\big( (10^k u (m(m-1))^\f)_q\\
&\;\;\;-(0\overline{0^j v ((m-1)m)^\f})_q\big)\\
&=(q+1)(q-2m)+q^{-k-1}(m-1-u+mq+u q^2)\\
  &\hspace{3cm}+q^{-j-1}(m-q-v+mq+vq^2).
  \end{split}
  \end{equation}

One can also verify that Lemmas \ref{l43}--\ref{l46} hold true for $g^{(3)}_{k,j,u,v}(q)$. Here we denote by $q^{(3)}_{k,j,u,v}$ the unique root of the equation $g^{(3)}_{k,j,u,v}(q)=0$ in $(p_1,\f)$, where $(k,j,u,v)$ satisfies
\begin{equation}
  \frac{u+1}{m p_1^{k}}+\frac{v p_1+p_1+v}{p_1^{j+2}}<1.
\end{equation}

\begin{lemma}
  \label{lem:348}
  Let $M=2m-1$. Then
   $q^{(3)}_{k,j,u,v}\in (p_1,p_2]\cap\B_2(M)$ if and only if
   $$
   k=2,\quad j=0,\quad u\in\set{0,1,\cdots,m-2},\quad v=m-1,
   $$
   or
      $$
   k=3,\quad j=0,\quad u\in\set{0,1,\cdots,m-1},\quad v=m-1,
   $$
   or $(k,j,u,v)=(4,0,m-1,m-1)$.
\end{lemma}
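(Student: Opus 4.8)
The plan is to mirror the structure of Lemma \ref{l411} exactly, since the function $g^{(3)}_{k,j,u,v}(q)$ satisfies the same monotonicity properties (Lemmas \ref{l43}--\ref{l46}), and the membership test $q^{(3)}_{k,j,u,v}\in(p_1,p_2]\cap\B_2(M)$ reduces, via Lemmas \ref{l43} and \ref{l44}, to the two conditions $g^{(3)}_{k,j,u,v}(p_1)<0$ (equivalently, the root exists) and $g^{(3)}_{k,j,u,v}(p_2)\ge 0$. The essential difference from the previous two subsections is that $g^{(3)}$ is \emph{not} symmetric in the pairs $(k,u)$ and $(j,v)$: the $k$-slot carries the tail $(m(m-1))^\f$ while the $j$-slot carries $((m-1)m)^\f$, reflected. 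Consequently I cannot assume $k\ge j$, and I must handle the roles of $(k,u)$ and $(j,v)$ separately. By the monotonicity in Lemma \ref{l46}, the sequence $q^{(3)}_{k,j,u,v}$ increases in $k$ and $j$ and decreases in $u$ and $v$, so for each question about which parameters land in $(p_1,p_2]$ it suffices to test the extremal configurations and invoke monotonicity to fill in the rest.

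First I would pin down $j$. The claim is that $j=0$ is forced. I would show $g^{(3)}_{k,1,u,v}(p_2)<0$ for the parameter choice that makes the root smallest in the $j=1$ layer, namely by checking $q^{(3)}_{1,1,m-1,m-1}>p_2$ via a direct substitution of $p_2^3=(m+1)p_2^2-p_2+m$ from (\ref{e42}) into (\ref{e411}), exactly as in the proof of Lemma \ref{l411}; monotonicity in $j$ then rules out all $j\ge 1$. Once $j=0$ is established, I would bound $k$ from above. Here the candidate bound is $k\le 4$: I expect $q^{(3)}_{5,0,m-1,m-1}>p_2$, which again follows by substituting (\ref{e42}) into (\ref{e411}) and checking the resulting polynomial in $p_2$ is negative, after which monotonicity in $k$ excludes $k\ge 5$. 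With $j=0$ fixed and $k\in\set{0,1,2,3,4}$ I would then treat these five values of $k$ one at a time.

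For the small cases $k=0$ and $k=1$ the root $q^{(3)}_{k,0,u,v}$ solves a quadratic in $q$ after multiplying through (since the $q^{-k-1}$ factors become polynomial), so I can write it in closed form and verify directly, using (\ref{e41}) and (\ref{e42}), that it never falls into $(p_1,p_2]$ for any admissible $u,v$. For $k=2,3,4$ I would use (\ref{e41}), (\ref{e42}) and (\ref{e411}) to determine exactly which $(u,v)$ give $g^{(3)}_{k,0,u,v}(p_1)<0\le g^{(3)}_{k,0,u,v}(p_2)$. The expected outcome is that $v=m-1$ is forced in every surviving case (larger $v$ pushes the root below $p_1$, by the decreasing monotonicity, while $v\le m-2$ pushes it above $p_2$), and then the cutoff in $u$ differs by $k$: for $k=2$ one loses $u=m-1$ (so $u\le m-2$), for $k=3$ all $u\in\set{0,\dots,m-1}$ survive, and for $k=4$ only the extremal corner $u=m-1$ survives, matching the three alternatives in the statement. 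The boundary values, e.g. $q^{(3)}_{2,0,m-1,m-1}>p_2$ and $q^{(3)}_{4,0,m-2,m-1}>p_2$ but $q^{(3)}_{4,0,m-1,m-1}\le p_2$, are exactly the monotonicity hinges that separate the three regimes.

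The main obstacle I anticipate is not conceptual but the bookkeeping of the boundary inequalities in the cases $k=2,3,4$: because $g^{(3)}$ lacks the $(k,u)\leftrightarrow(j,v)$ symmetry exploited earlier, each value of $k$ produces a genuinely different polynomial in $p_2$ (and in $m$) whose sign must be determined, and the three-way split in the final answer hinges on getting these comparisons exactly right rather than off by one in $u$. I would organize the verification by always reducing powers of $p_2$ through (\ref{e42}) to lower the degree, then factoring out positive quantities such as $(p_2-1)$ or powers of $p_2$ to isolate a low-degree polynomial whose sign is transparent for $m\ge 1$, checking the small case $m=1$ separately if the generic factorization degenerates.
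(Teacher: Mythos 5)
Your overall strategy---reduce membership in $(p_1,p_2]$ to the two sign conditions $g^{(3)}_{k,j,u,v}(p_1)<0$ and $g^{(3)}_{k,j,u,v}(p_2)\ge 0$, then use monotonicity in $(k,j,u,v)$ to trim the parameter space down to a finite list of boundary checks---is exactly the paper's, and your observation that the lack of $(k,u)\leftrightarrow(j,v)$ symmetry forces you to treat the two slots separately is correct. However, there are two concrete problems with the plan as written.

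First, a genuine gap in the case analysis. You propose to force $j=0$ by checking $q^{(3)}_{1,1,m-1,m-1}>p_2$ and then ``monotonicity in $j$ rules out all $j\ge 1$.'' But the corner $(1,1,m-1,m-1)$ only dominates (from below) the parameters with \emph{both} $k\ge 1$ and $j\ge 1$; monotonicity from that corner says nothing about the layer $k=0$, $j\ge 1$, and your subsequent treatment of $k=0$ assumes $j=0$ has already been fixed (you only solve the quadratic $q^{(3)}_{0,0,u,v}$). The configurations $(0,j,u,v)$ with $j\ge 1$ are therefore never examined. The paper's proof is structured precisely to avoid this hole: it first concludes only that ``$k=0$ or $j=0$,'' and then disposes of the entire $k=0$ column (all $j$) by a separate verification that no $(0,j,u,v)$ satisfies both boundary inequalities.

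Second, one of your stated ``monotonicity hinges'' is wrong in a way that would derail the verification. You assert $q^{(3)}_{2,0,m-1,m-1}>p_2$ as the reason $u=m-1$ is lost at $k=2$. Since $q^{(3)}_{k,j,u,v}$ is \emph{decreasing} in $u$, that inequality would force $q^{(3)}_{2,0,u,m-1}>p_2$ for every $u\le m-1$, contradicting the conclusion that $u\in\set{0,\dots,m-2}$ survives at $k=2$. In fact the corner $(2,0,m-1,m-1)$ fails at the \emph{lower} endpoint: using $p_1^2=mp_1+m$ one computes
\[
\frac{(m-1)+1}{m\,p_1^{2}}+\frac{(m-1)p_1+p_1+(m-1)}{p_1^{2}}=\frac{mp_1+m}{p_1^{2}}=1,
\]
so the existence condition for a root in $(p_1,\infty)$ holds with equality, i.e.\ $q^{(3)}_{2,0,m-1,m-1}=p_1\notin(p_1,p_2]$. (Consistently, the paper's identity $q^{(3)}_{k+1,0,m-1,m-1}=q^{(2)}_{k,0,0,m-1}$ together with the formula for $q^{(2)}_{1,0,u,v}$ gives the same value $p_1$.) So the three-way split in $u$ is governed by exclusions at \emph{both} ends of the interval, not only at $p_2$, and your bookkeeping must track which end each corner fails at.
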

\begin{proof}

First we show that either $k=0$ or $j=0$. Then it suffices to prove $q^{(3)}_{1,1,m-1,m-1}>p_2$. By using (\ref{e42}) in (\ref{e411}) it follows that
\begin{equation*}
  \begin{split}
    g^{(3)}_{1,1,m-1,m-1}(p_2)&=p_2^{-2}\big((2-m)p_2^3-3(p_2^2-m p_2)-(p_2-1)\big)<0.
  \end{split}
\end{equation*}
Hence, we  have either $k=0$ or $j=0$.

Now we claim $k\ne 0$.
Suppose on the contrary that $k=0$. Then
$q^{(3)}_{0,j,u,v}\in(p_1,p_2]$ if and only if
\begin{equation}\label{e415}
g^{(3)}_{0,j,u,v}(p_1)<0,\quad  g^{(3)}_{0,j,u,v}(p_2)\ge 0.
\end{equation}
By using    (\ref{e41}) and (\ref{e42}) in (\ref{e411}) one can verify that no parameters $j,u,v$ satisfy  (\ref{e415}). Therefore,
 $ k>j=0$.

Finally, by using    (\ref{e41}) and (\ref{e42}) in (\ref{e411}) it follows that
  $q^{(3)}_{k,0,u,v}\in(p_1,p_2]$ if and only if
 $$
k=2,\quad j=0,\quad u\in\set{0,1,\cdots,m-2},\quad v=m-1,
$$
or
$$
k=3,\quad j=0,\quad u\in\set{0,1,\cdots,m-1},\quad v=m-1,
$$
or
$$(k,j,u,v)=(4,0,m-1,m-1).$$
\end{proof}

By Lemmas \ref{l47}--\ref{lem:348} we obtain the smallest base $q_2(M)$ of $\B_2(M)$ with $M=2m-1$.
\begin{theorem}
  \label{th:417}
  Let $M=2m-1$. Then
    $$(p_1,p_2]\cap\B_2(M)=\bigcup_{k=2}^3\left(\bigcup_{u=0}^{m-1}\set{q^{(2)}_{k,0,u,m-1}}\cup\bigcup_{u=0}^{m-2}\set{q^{(3)}_{k,0,u,m-1}}\right).$$

  Furthermore, the smallest base $q_2(M)$ of $\B_2(M)$ is
  $$
  q_2(M)=q_{2,0,m-1,m-1}^{(2)},
  $$
  the appropriate root of $x^4=(m-1)x^3+2 m x^2+m x+1$.
\end{theorem}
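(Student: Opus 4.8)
The plan is to collect the solution families already isolated in Lemmas~\ref{l47}, \ref{l411} and~\ref{lem:348}, reconcile the resulting set with the union in the statement, and then compare the minima of the two surviving families. First I would use Lemma~\ref{l41} together with Remark~\ref{re:1}: a base $q\in(p_1,p_2]$ lies in $\B_2(M)$ exactly when it solves one of (\ref{eq:41}), (\ref{eq:42}), (\ref{eq:43}), since (\ref{eq:44}) is equivalent to (\ref{eq:43}). Lemma~\ref{l47} discards (\ref{eq:41}) entirely, while Lemmas~\ref{l411} and~\ref{lem:348} record the admissible parameters for (\ref{eq:42}) and (\ref{eq:43}). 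Reading these off gives
\[
(p_1,p_2]\cap\B_2(M)=\set{q^{(2)}_{k,0,u,m-1}:k\in\set{2,3},\ 0\le u\le m-1}\cup R,
\]
where $R$ is the set of $q^{(3)}$-roots from Lemma~\ref{lem:348}. The set $R$ carries two indices beyond the range $k\in\set{2,3},\ 0\le u\le m-2$ present in the statement, namely $(k,u)=(3,m-1)$ and the isolated index $(4,m-1)$.

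These two extra roots I would absorb into the $q^{(2)}$-family using the sequence identity $(m-1)(m(m-1))^\f=((m-1)m)^\f$. Indeed, with this identity the left-hand side of (\ref{eq:43}) for $(k_3,j_3,u_3,v_3)=(3,0,m-1,m-1)$ becomes $(10^3((m-1)m)^\f)_q$, which is precisely the left-hand side of (\ref{eq:42}) for $(k_2,j_2,u_2,v_2)=(2,0,0,m-1)$, while the two right-hand sides already agree; hence $q^{(3)}_{3,0,m-1,m-1}=q^{(2)}_{2,0,0,m-1}$. The same identity applied to $(4,0,m-1,m-1)$ matches (\ref{eq:42}) for $(3,0,0,m-1)$, so that $q^{(3)}_{4,0,m-1,m-1}=q^{(2)}_{3,0,0,m-1}=p_2$. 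Both limits already occur in the $q^{(2)}$-family, so the solution set collapses to exactly the union displayed in the theorem.

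For the smallest base I would invoke the monotonicity (Lemma~\ref{l46} and its analogues for $g^{(2)}$ and $g^{(3)}$): over the admissible parameters each family attains its minimum at the smallest $k$ and the largest $u$, i.e.\ at $q^{(2)}_{2,0,m-1,m-1}$ and $q^{(3)}_{2,0,m-2,m-1}$ respectively. The decisive cross-family comparison I would settle by a one-line subtraction from (\ref{e47'}) and (\ref{e411}),
\[
g^{(2)}_{2,0,m-1,m-1}(q)-g^{(3)}_{2,0,m-2,m-1}(q)=q^{-2}(q-1)>0\qquad\text{on }(p_1,\f);
\]
since both functions are strictly increasing with a single zero in $(p_1,\f)$, the larger one has the smaller root, whence $q^{(2)}_{2,0,m-1,m-1}<q^{(3)}_{2,0,m-2,m-1}$. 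Thus $q^{(2)}_{2,0,m-1,m-1}$ is the least element of $(p_1,p_2]\cap\B_2(M)$; as Lemma~\ref{prop:22} forces $\B_2(M)\cap(1,p_1]=\emptyset$ and every base exceeding $p_2$ is larger, it is $q_2(M)$. Finally, clearing $q^3$ in $g^{(2)}_{2,0,m-1,m-1}(q)=0$ gives $q^5-mq^4-(m+1)q^3+mq^2+(m-1)q+1=0$, which factors as $(q-1)\big(q^4-(m-1)q^3-2mq^2-mq-1\big)$; since $q_2(M)>1$, it is the appropriate root of $x^4=(m-1)x^3+2mx^2+mx+1$.

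The main obstacle is the reconciliation-plus-comparison step rather than any isolated computation: the monotonicity lemmas order the $q^{(2)}$'s and the $q^{(3)}$'s only \emph{within} each family, so one must both recognize that the stated union silently swallows two genuine $q^{(3)}$-solutions (via the identity $(m-1)(m(m-1))^\f=((m-1)m)^\f$) and perform the subtraction that ranks the two family-minima against one another. Once these are in hand, the factorization that produces the quartic is routine.
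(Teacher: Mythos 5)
Your proposal is correct and follows essentially the same route as the paper: the same absorption of the extra $q^{(3)}$-roots via $q^{(3)}_{k+1,0,m-1,m-1}=q^{(2)}_{k,0,0,m-1}$ (the paper states this as the identity $g^{(3)}_{k+1,0,m-1,m-1}=g^{(2)}_{k,0,0,m-1}$, you derive it from $(m-1)(m(m-1))^\f=((m-1)m)^\f$), and the same reduction by monotonicity to comparing $q^{(2)}_{2,0,m-1,m-1}$ with $q^{(3)}_{2,0,m-2,m-1}$. Your global computation $g^{(2)}_{2,0,m-1,m-1}-g^{(3)}_{2,0,m-2,m-1}=q^{-2}(q-1)>0$ is just a slightly cleaner version of the paper's pointwise inequality at the root $t$, and your factorization of the quintic is exactly what the paper leaves implicit.
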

\begin{proof}
  Note by (\ref{e47'}) and (\ref{e411}) that $g^{(3)}_{k+1,0,m-1,m-1}(q)=g^{(2)}_{k,0,0,m-1}(q)$ for any $k\ge 0$. Then $$q_{k+1,0,m-1,m-1}^{(3)}=q^{(2)}_{k,0,0,m-1}.$$ Hence, by Lemmas \ref{l47}--\ref{lem:348} it follows that
  $$
  (p_1,p_2]\cap\B_2(M)=\bigcup_{k=2}^3\left(\bigcup_{u=0}^{m-1}\set{q^{(2)}_{k,0,u,m-1}}\cup\bigcup_{u=0}^{m-2}\set{q^{(3)}_{k,0,u,m-1}}\right).
  $$

 Now we consider the smallest base $q_2(M)$.  By   the monotonicity  it suffices to compare
  $$
  s:=q^{(2)}_{2,0,m-1,m-1}\quad\textrm{and}\quad t:=q^{(3)}_{2,0,m-2,m-1}.
  $$
  Note by (\ref{e47'}) and (\ref{e411}) that
 \begin{equation*}
 \begin{split}
  g^{(3)}_{2,0,m-2,m-1}(t)&=(t+1)(t-2m)+t^{-3}\big(1+m t+(m-2)t^2\big)\\
  &\hspace{3cm}+t^{-1}\big(1+(m-1)(t^2+t)\big)\\
  &<(t+1)(t-2m)+t^{-3}\big(1+(m-1)(t^2+t)\big)\\
  &\hspace{3cm}+t^{-1}\big(1+(m-1)(t^2+t)\big)\\
  &=g^{(2)}_{2,0,m-1,m-1}(t).
\end{split}
\end{equation*}
This, together with $ g^{(3)}_{2,0,m-2,m-1}(t)=0=g^{(2)}_{2,0,m-1,m-1}(s)$, implies that
$$
g^{(2)}_{2,0,m-1,m-1}(s)<g^{(2)}_{2,0,m-1,m-1}(t).
$$

 By the monotonicity of the function $g^{(2)}_{2,0,m-1,m-1}$ it  yields that
 $$ q^{(2)}_{2,0,m-1,m-1}<q^{(3)}_{2,0,m-2,m-1}.$$
  Hence,
  $q_2(M)=  q^{(2)}_{2,0,m-1,m-1} $
   satisfies the equation
  \[x^4=(m-1)x^3+2 m x^2+m x+1.\]
\end{proof}

%

\section{Proof of Theorem \ref{th:13} and final remarks}\label{sec:multiple expansions}

In this section we will prove Theorem \ref{th:13} and consider some questions on multiple expansions with multiple digits. For simplicity we write $q_2=q_2(2)$.
Recall from Section \ref{sec:unique expansion} that $I_{q_2}=[0, 2/(q_2-1)]$ and the components
\[f_k(I_{q_2})=\left[\frac{k}{q_2},\frac{k}{q_2}+\frac{2}{q_2(q_2-1)}\right],\quad k=0,1,2.\]
 Then the \emph{switch region} $ S_{q_2}$ is defined by
\begin{equation}\label{eq:51}
\begin{split}
S_{q_2}:&=\bigcup_{k=1}^2 f_{k-1}(I_{q_2})\cap f_k(I_{q_2})\\
&= \left[\frac{1}{q_2}, \frac{2}{q_2(q_2-1)}\right]\cup \left[\frac{2}{q_2}, \frac{1}{q_2}+\frac{2}{q_2(q_2-1)}\right].
\end{split}
\end{equation}

\begin{proof}[Proof of Theorem \ref{th:13}]
Clearly, $q_2$ is not an integer. Then almost every $x\in I_{q_2}$ has a continuum of different $q_2$-expansions (cf.~\cite{Sidorov_2003, Dajani_DeVries_2007}). This yields that $q_2\in\B_{2^{\aleph_0}}(2)$.

  Now  we prove $q_2\in\B_{\aleph_0}(2)$. By Theorem \ref{th:35} it gives that $q_2$ satisfies
$
q_2^2=2q_2+1.
$
This implies that
$
\al(q_2)=(20)^\f.
$
  Then
  \[
  \overline{\al(q_2)}\le \al_{i+1}(q_2)\al_{i+2}(q_2)\cdots\le \al(q_2)\quad\textrm{for all}\quad i\ge 0.
  \]
In terms of \cite[Theorem 2.6]{Komornik_Loreti_2007} it follows that $x=1\in I_{q_2}$ has countably infinitely many $q_2$-expansions, i.e.,
\[
 (20)^\f, \quad \textrm{and}\quad (20)^k 21, ~(20)^k 12^\infty  \quad\textrm{for all}\quad k\geq 0.
\]
 This establishes $q_2\in\B_{\aleph_0}(2)$.

 Finally,  we will prove $q_2\in\B_k(2)$ for all $k\ge 3$. This can be verified inductively by showing that  the number
  \[x_k=(1(00)^{k-1}1^\f)_{q_2}\]
   has exactly $k$ different $q_2$-expansions.

   If $k=1$, then by Proposition \ref{prop:23} it follows that
$x_1=(1^\f)_{q_2}$ has a unique $q_2$-expansions.

Now suppose that $x_k$ has exactly $k$ different $q_2$-expansions. Note that $q_2^2=2q_2+1$, i.e., $
(10^\infty )_{q_2}=(0210^\infty )_{q_2}.
$
This implies
\begin{equation}\label{e54}
x_{k+1}=(1(00)^{k}1^\f)_{q_2}=(021(00)^{k-1}1^\f)_{q_2}.
\end{equation}
By   Proposition \ref{prop:23} it follows that $ (00)^k 1^\f \in\U_{q_2}'$. Moreover, note that
\begin{equation*}\begin{split}
(21(00)^{k-1}1^\f)_{q_2}=\frac{2}{q_2}+\frac{1}{q^2_2}+\frac{1}{q^{2k}_2(q_2-1)}  >\frac{2}{q_2}+\frac{1}{q^2_2} =\frac{1}{q_2}+\frac{2}{q_2(q_2-1)}.
\end{split}\end{equation*}
Then by (\ref{eq:51}) this implies $(21(00)^{k-1}1^\f)_{q_2}\notin S_{q_2}$. By induction it follows that $(21(00)^{k-1}1^\f)_{q_2}$ has exactly $k$ different $q_2$-expansions. Hence, by (\ref{e54}) and Lemma \ref{lem:26} it follows that
$x_{k+1}$ has exactly  $k+1$ different $q_2$-expansions. This implies that $q_2\in \B_k(2)$ for any $k\ge 3$. Therefore, the smallest base $q_k=q_k(2)$ of $\B_k(2)$ satisfies $q_k\le q_2$.

On the other hand, for $k\ge 3$ if $x\in I_{q_k}$ has exactly $k$ different $q_k$-expansions, then by Lemma \ref{lem:26} and a linear transformation of $x$ it follows that there exists  $y\in I_{q_k}$   has exactly two different $q_k$-expansions. Hence,  $q_k\ge q_2$ for any $k\ge 3$.   This establishes the theorem.
\end{proof}

At the end of this section we consider some   questions on multiple expansions with multiple digits.
In terms of Theorems \ref{th:11} (b) and \ref{th:13}   we have the following questions:
\begin{itemize}
   \item  Does Theorem \ref{th:11} (b) holds for all $M=2m-1$?
    \item  Does Theorem \ref{th:13} holds for all $M=2m$?
\end{itemize}

The following theorem was proven by Baker and Sidorov \cite{Baker_Sidorov_2014, Baker_2015}.
\begin{theorem}
  \label{th:51}
  Let $M=1$. Then
  \begin{enumerate}[label={\rm(\alph*)}]
    \item  the smallest element of $\B_k(1)$ for $k\ge 3$ is
  \[
  q_k(1)\approx 1.75488
  \]
  the appropriate root of $x^3=2x^2-x+1$.
    \item  The second smallest element of $\B_{\aleph_0}(1)$ is
$$
q_{\aleph_0}(1)\approx 1.64541
$$
the appropriate root of the equation $x^6=x^4+x^3+2x^2+x+1$.
  \end{enumerate}
  \end{theorem}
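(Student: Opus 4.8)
The plan is to specialise the framework of Sections \ref{sec:unique expansion}--\ref{sec:smallest base with M odd} to $M=1$, i.e.\ $m=1$, and to handle the two parts in turn. The first move is to identify the critical bases: here $p_1=(1+\sqrt5)/2$, and by (\ref{eq:22}) with $m=1$ one has $\al(p_2)=(1100)^\f$, so $p_2$ is the root of $x^4=x^3+x^2+1$. Since $x^4-x^3-x^2-1=(x+1)(x^3-2x^2+x-1)$, the base $p_2(1)$ is exactly the appropriate root of $x^3=2x^2-x+1$, which is the constant $q_k(1)\approx1.75488$ of part (a). Part (a) therefore asserts that $p_2(1)$ is the least element of $\B_k(1)$ for all $k\ge3$, and by Lemma \ref{prop:22} the whole analysis can be confined to $q\in(p_1,p_2]$, where $\U_q'$ is described explicitly by Proposition \ref{prop:24}.

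For the lower bound in (a) I would show that no $q<p_2(1)$ carries a point with exactly $k$ expansions for any finite $k\ge3$. Lemma \ref{prop:22} disposes of $q\le p_1$, so assume $p_1<q<p_2$. Following the geometric picture of Section \ref{sec:unique expansion}, view the expansions of a fixed $x$ as the leaves of a binary tree whose branch nodes are the returns of the orbit to the switch region, binary branching being ensured by Lemma \ref{lem:26}; a finite number of expansions forces finitely many branches, and each tail between branches lies in $\U_q'$. Using the explicit form of $\U_q'$ from Proposition \ref{prop:24} together with $\al(q)<\al(p_2)=(1100)^\f$, the key is an \emph{all-or-nothing} dichotomy: after a first branch a tail either never re-enters the switch region (contributing at most two expansions in total, the $\B_2$ regime of Theorem \ref{th:417}) or re-enters infinitely often (forcing a countable or continuum count), so no finite count $\ge3$ can occur. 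For the matching upper bound I would construct at $q=p_2(1)$, for each $k\ge3$, a point with exactly $k$ expansions, imitating the induction in the proof of Theorem \ref{th:13}: the relation $x^3=2x^2-x+1$ supplies a single switching identity, and prepending a fixed block to a point with $k-1$ expansions forces exactly one further controlled return to the switch region, each side branch terminating in $\U_{p_2}'$; an induction parallel to (\ref{e54}) then yields exactly $k$ expansions.

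Part (b) I would split into two claims. The smallest element of $\B_{\aleph_0}(1)$ is $p_1$: nothing below $p_1$ lies in $\B_{\aleph_0}(1)$ by Lemma \ref{prop:22}, while at $p_1$ the point $x=1$ satisfies $\al(p_1)=(10)^\f$ and has exactly countably many expansions by the Komornik--Loreti criterion used in the proof of Theorem \ref{th:13}. For the second smallest I would minimise over $q\in(p_1,p_2)$ the requirement that some $x$ admit an infinite expansion tree with only countably many infinite paths---equivalently, an orbit returning to the switch region infinitely often while every alternative branch it spawns ends in a unique expansion. Encoding this as a lexicographic condition on $\al(q)$ and taking the least admissible $q$ should single out the root of $x^6=x^4+x^3+2x^2+x+1\approx1.64541$, with an explicit construction at that base realising a point with exactly $\aleph_0$ expansions.

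The main obstacle throughout is the lower bounds, not the constructions. Unlike the single-branch analysis behind $\B_2$ in Sections \ref{sec:smallest base with M even}--\ref{sec:smallest base with M odd}, ruling out every base below the stated thresholds demands control of the entire tree of expansions via Proposition \ref{prop:24}, and in particular a clean proof of the all-or-nothing re-entry dichotomy. In part (b) the most delicate point is to certify that the count at the threshold is exactly $\aleph_0$ rather than $2^{\aleph_0}$: this reduces to showing that the extremal orbit has a unique infinite itinerary through the switch region, and it is precisely this lexicographic bookkeeping that pins the value at $1.64541$ and separates it from nearby bases.
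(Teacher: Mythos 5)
First, a point of fact: the paper does not prove this statement at all. Theorem \ref{th:51} is quoted verbatim as a known result of Baker and Sidorov \cite{Baker_Sidorov_2014} and Baker \cite{Baker_2015}, so there is no in-paper argument to compare yours against. Your sketch does get the right orientation: the observation that for $m=1$ the base $p_2$ of (\ref{eq:22}) satisfies $x^4=x^3+x^2+1=(x+1)(x^3-2x^2+x-1)\cdot 0$-correction, hence coincides with the root of $x^3=2x^2-x+1\approx 1.75488$, is correct and is genuinely the right structural reason why the threshold for $k\ge 3$ sits exactly at $p_2(1)$, where Proposition \ref{prop:24} ceases to describe $\U_q'$.

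That said, as a proof the proposal has real gaps, and they sit exactly where you flag them. For part (a) the entire content of the theorem is the ``all-or-nothing'' dichotomy --- that for $p_1<q<p_2$ a point with at least three expansions must have infinitely many --- and you assert it rather than prove it. It is not a formal consequence of Proposition \ref{prop:24} plus Lemma \ref{lem:26}: one must show that once the orbit of a branching tail re-enters the switch region a second time, the lexicographic constraints $\al(q)<(1100)^\f$ force infinitely many further returns, and this requires a case analysis of which element of $\U_q'$ each tail can be (this is the bulk of the Baker--Sidorov argument). Note also that the dichotomy must be compatible with Theorem \ref{th:11}: bases in $(q_2(1),p_2(1))$ do carry points with exactly two expansions, so the statement has to be calibrated to ``at least two branch nodes,'' not ``at least one.'' For part (b) the situation is worse: you never derive the sextic $x^6=x^4+x^3+2x^2+x+1$, you only say that an unspecified minimisation ``should single out'' its root, and you give no mechanism for certifying that the count at that base is exactly $\aleph_0$ rather than $2^{\aleph_0}$ (the distinction between countably many returns to the switch region and a tree with continuum many infinite paths is precisely the delicate point in \cite{Baker_2015}). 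As it stands the proposal is a plausible research plan that reproduces the statement's constants where they can be read off from $p_2(1)$, but it does not constitute a proof of either part.
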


In terms of   Theorems \ref{th:11} (a), \ref{th:13} and   \ref{th:51} we have an accurate formulae for the smallest bases $q_k(1), q_k(2)$ for   $k=2,3,\cdots$, and the second smallest base $q_{\aleph_0}(1)$. Moreover, by Theorem \ref{th:12} we have an accurate formulae for the smallest base $q_2(M)$ for all $M\ge 1$.
\begin{itemize}
  \item What is  the smallest base $q_k(M)$ of $\B_k(M)$ for $k\ge 3$ and $M\ge 3$?

  \item What is the second smallest element $q_{\aleph_0}(M)$ of $\B_{\aleph_0}(M)$ for $M\ge 2$?
\end{itemize}

\section*{Acknowledgement}

The first author  was supported by   NSFC No.  11401516 and Jiangsu Province Natural
Science Foundation for the Youth no BK20130433. The second  was
supported by  NSFC No. 11271137 and Science and Technology Commission of Shanghai Municipality (STCSM), grant No. 13dz2260400. The third author was supported by NSFC No. 11201312, 61373087; the Foundation for
Distinguished Young Teachers in Guangdong, China no Yq2013144.


\end{document}